\DeclareMathAlphabet\mathbb{U}{msb}{m}{n}
\tikzset{%
  dummy/.style    = {circle,draw,inner sep=0pt,minimum size=2mm}%
}%
\def\@testdef #1#2#3{%
  \def\reserved@a{#3}\expandafter \ifx \csname #1@#2\endcsname
  \reserved@a  \else
  \typeout{^^Jlabel #2 changed:^^J%
    \meaning\reserved@a^^J%
    \expandafter\meaning\csname #1@#2\endcsname^^J}%
  \@tempswatrue \fi}
\numberwithin{equation}{section} 
\numberwithin{figure}{section}
\newtheorem{theorem}[equation]{Theorem}%
\newtheorem*{theorem*}{Theorem}%
\newtheorem{lemma}[equation]{Lemma}%
\newtheorem{proposition}[equation]{Proposition}%
\newtheorem{corollary}[equation]{Corollary}%
\newtheorem*{conjecture*}{Conjecture}%
\providecommand{\customgenericname}{}
\newcommand{\newcustomtheorem}[2]{%
  \newenvironment{#1}[1]
  {%
   \renewcommand\customgenericname{#2}%
   \renewcommand\theinnercustomgeneric{##1}%
   \innercustomgeneric
  }
  {\endinnercustomgeneric}
}
\theoremstyle{definition} 
\newtheorem{definition}[equation]{Definition}%
\newtheorem*{definition*}{Definition}%
\newtheorem{example}[equation]{Example}%
\newtheorem{remark}[equation]{Remark}%
\newtheorem{notation}[equation]{Notation}%
\newcommand{\vect}[1]{\text{\overrightharp{\ensuremath{#1}}}}
\newcommand{\Set}{\ensuremath{\mathsf{Set}}}
\newcommand{\sSet}{\ensuremath{\mathsf{sSet}}}%
\newcommand{\Op}{\mathsf{Op}}%
\newcommand{\sOp}{\ensuremath{\mathsf{sOp}}}%
\newcommand{\dSet}{\mathsf{dSet}}
\newcommand{\Fun}{\mathsf{Fun}}
\DeclareMathOperator{\colim}{colim}%
\renewcommand{\O}{\ensuremath{\mathcal O}}
\title{Rigidification of dendroidal infinity-operads}
\author{Peter Bonventre, Lu\'is A. Pereira}%
\date{\today}
\begin{document}

\maketitle

\begin{abstract}
	We give an explicit description
	of the rigidification of an $\infty$-operad
	as a simplicial operad.
	This description is based on the notion of 
	dendroidal necklace, 
	extending work of 
	Dugger and Spivak
	from the categorical context to 
	the operadic context,
	although with a different framework,
	which relates constructions involving necklaces 
	to a standard factorization
	of maps in the category of trees.
\end{abstract}

\tableofcontents

\section{Introduction}

The notion of \emph{$\infty$-operad} is a generalization of the notion of \emph{(colored) operad}
(also sometimes called \emph{multicategories}),
introduced by Moerdijk-Weiss \cite{MW07},
where composition of operations is only defined 
``up to a contractible space of choices'',
in the same way that quasi-categories generalize categories.
Moreover, just as quasi-categories are defined as those simplicial sets 
$X \in \mathsf{sSet} = \mathsf{Set}^{\Delta^{op}}$
(for $\Delta$ the simplicial category)
satisfying a lifting condition against inner horn inclusions,
so too are $\infty$-operads defined as those \emph{dendroidal sets}
$X \in \mathsf{dSet} = \mathsf{Set}^{\Omega^{op}}$
(for $\Omega$ the category of trees)
satisfying a lifting condition against 
dendroidal inner horn inclusions \cite[\S 2.1]{CM11}.

There are two main procedures for converting a presheaf
$X \in \mathsf{dSet}$
into a (strict) operad,
given by the left adjoints
$W_!$, $\tau$
in two adjunctions as below,
where $\Op$ (resp. $\sOp$) 
denotes operads of sets (resp. of simplicial sets).
\begin{equation}\label{SOPDSET_EQ}
	W_! \colon \dSet \rightleftarrows \sOp \colon h c N
	\qquad
	\tau \colon \dSet \rightleftarrows \Op \colon N
\end{equation}
Before recalling how these adjunctions are defined,
we discuss their importance. 
First, in the $(\tau,N)$-adjunction,
the right adjoint, the \emph{nerve} $N$,
is a fully faithful inclusion whose image consists of 
(certain) $\infty$-operads, 
cf. Remark \ref{SSCOTHER REM},
thus making precise the idea that 
$\infty$-operads generalize operads.
On the other hand, the $(W_!,hcN)$-adjunction
is central for the homotopy theory of $\infty$-operads,
as it was shown to be a Quillen equivalence \cite{CM13b}
between the model structure on $\dSet$ 
(with fibrant objects the $\infty$-operads)
and the canonical model structure on $\sOp$.
Moreover, in \cite{BP_TAS} the authors established an equivariant
version of the Quillen equivalence in \cite{CM13b},
modeling the homotopy theory of 
\emph{equivariant operads with norm maps}.
In particular, 
our work here plays a minor but necessary role in the proofs
in \cite{BP_TAS},
cf. \cite[Lemma \ref{TAS-WLEFTQPUSH LEM}]{BP_TAS},
by giving an explicit description\footnote{It is worth noting that the description of these specific operads is well known,
yet the extant references we are aware of seem to leave
this description as an exercise to the reader.}
of the simplicial operads
$W_!(\partial \Omega[T]),W_!(\Lambda^E[T])$,
cf. Examples \ref{WPARTIALT_EX},\ref{WPARTIALT2_EX}.

Common to both adjunctions in \eqref{SOPDSET_EQ}
is that the right adjoints
$hcN$, $N$ are straightforward to describe
(cf. \eqref{TWONER EQ}), 
while the left adjoints $W_!$, $\tau$ are more mysterious,
as they involve colimits in operads (cf. \eqref{WXDEF_EQ}).
The main goal of this paper, 
which is an offshoot of our work in \cite{BP_TAS},
is to give an explicit description of $W_!$,
cf. Theorem \ref{THMB},
generalizing work of Dugger and Spivak \cite{DS11}
in the context of quasi-categories.
Additionally, a variation of our main constructions
gives a description of the simpler functor $\tau$
(cf. Remark \ref{TAUFUNEX REM}).

We now recall the definitions of the functors in
\eqref{SOPDSET_EQ}.
First, each tree $T \in \Omega$
has an associated colored operad
$\Omega(T) \in \mathsf{Op}$
with colors the edges of the tree and 
operations generated by the nodes
(\cite[\S 3]{MW07}; see also Example \ref{OMTSEG EX}).
Moreover, there is a ``fattened'' replacement 
$W(T) \in \sOp$ for $\Omega(T)$,
which can be built \cite[Rem. 7.3]{MW09}
as the Boardman-Vogt construction on $\Omega(T)$
(though here we use a novel description of $W(T)$,
cf. Proposition \ref{PROPA PROP}),
which replaces the non-empty mapping sets of 
$\Omega(T)$, which are all singletons $\**$,
with larger \emph{contractible spaces}.
The functors $hcN$ and $N$,
which are called, respectively,
the \emph{homotopy coherent nerve}
and the \emph{nerve},
are then given by (where $\O$ is in $\sOp$ or $\Op$ as appropriate)
\begin{equation}\label{TWONER EQ}
	hcN \O(T) = \sOp(W(T),\O),
	\qquad
	N \O(T) = \Op(\Omega(T),\O).
\end{equation}
Loosely speaking, $hcN$ can thus be regarded as a variant of $N$
obtained by replacing the notion of strict equality with that of homotopy.
Writing 
$\Omega[T]\in \mathsf{dSet} = \mathsf{Set}^{\Omega^{op}}$
for the representable functor
$\Omega[T](-) = \mathsf{dSet}(-,T)$
associated to $T \in \Omega$,
by abstract nonsense one then has the formulas
\begin{equation}\label{WXDEF_EQ}
	W_!X = \colim_{\Omega[T] \to X} W(T),
\qquad
	\tau = \colim_{\Omega[T] \to X} \Omega(T).
\end{equation}
However, as previously noted, 
the colimits in \eqref{WXDEF_EQ}
take place in $\sOp$, $\Op$,
making these formulas rather opaque.
Just as in the work in \cite{DS11} in the categorical context,
the key to obtaining explicit formulas for
$W_!$, $\tau$ will be the notion of (dendroidal) necklace,
which we now introduce
(the reason why necklaces are useful in this process is explained following \eqref{KEYSTRAT EQ}).

In the work of Dugger and Spivak in the categorical context
\cite{DS11},
a \emph{necklace} is a simplicial set of the form
$\Delta^{n_1} \vee 
\Delta^{n_2} \vee \cdots \vee
\Delta^{n_k}$
where each $\Delta^{n_i}$ is glued along its terminal vertex to the initial vertex of $\Delta^{n_{i+1}}$.
Moreover, we demand $n_i > 0$
except for the necklace $\Delta^0$ consisting of a single point.
On a terminological note,
the initial and terminal vertices of the 
$\Delta^{n_i}$ are called the \emph{joints} of the necklace,
while the $\Delta^{n_i}$ with $n_i>0$ are called beads\footnote{In particular, 
we consider the exceptional necklace $\Delta^0$ to
have no beads.  
This differs slightly from the convention in \cite{DS11}, 
which regards $\Delta^0$ as a bead of the necklace $\Delta^0$. 
This ultimately makes little difference,
but in our convention beads are always in bijection with 
vertices of the \emph{tree of joints}, cf. Figure \ref{FIGURE},
as discussed below.}.
Since the $\Delta^n$ are simply the representable presheaves in 
$\mathsf{sSet}$,
their role in the operadic context is naturally played by the 
representable presheaves $\Omega[T]$ of $\mathsf{dSet}$
for $T$ a tree.
However, formulating the notion of necklace in the dendroidal context requires some care.
This is because, while each $\Omega[T]$ does have a terminal vertex, 
corresponding to the root of $T$, 
it in general has 
\emph{multiple} initial vertices, corresponding to the leaves
of $T$.
As such, when specifying a dendroidal necklace
one must also specify the leaves along which to glue.
As an example, the tree arrangement of the trees
$T_1,T_2,T_3,T_4,T_5$ on the left
in Figure \ref{FIGURE}
\begin{figure}[ht]
\[
\begin{tikzpicture}[grow=up,auto,level distance=2.1em,
every node/.style = {font=\footnotesize,inner sep=2pt},
dummy/.style={circle,draw,inner sep=0pt,minimum size=1.375mm}]
\begin{scope}[xshift=-2em]
\begin{scope}
\tikzstyle{level 2}=[sibling distance=2.25em]%
\tikzstyle{level 3}=[sibling distance=1.25em]%
\node at (-0.5,1) [font = \normalsize] {$T_1$}
child{node [dummy] {}
	child{
	edge from parent node [near end,swap]{$l_3$}}
	child{node [dummy] {}
		child{
		edge from parent node [near end,swap]{$l_2$}}
		child{
		edge from parent node [near end]{$l_1$}}
	}
	edge from parent node {$a$}};
\end{scope}
\node at (1,1) [font = \normalsize] {$T_2$}
	child{node [dummy] {}
		child{node [dummy] {}
			child{node [dummy] {}}
	}
	edge from parent node [swap] {$b$}};
\begin{scope}
\tikzstyle{level 2}=[sibling distance=0.95em]%
\node at (2.05,3) [font = \normalsize] {$T_3$}
child{node [dummy] {}
	child{node [dummy] {}}
	child{node [dummy] {}}
	edge from parent node {$c$}};
\end{scope}
\begin{scope}
\tikzstyle{level 2}=[sibling distance=2.5em]%
\node at (2.5,1) [font = \normalsize] {$T_4$}
	child{node [dummy] {}
		child{node[dummy] {}
			child{node[dummy] {}}
		}
		child{
		edge from parent node {$c$}}
	edge from parent node [swap] {$d$}};
\end{scope}
\begin{scope}
\tikzstyle{level 2}=[sibling distance=4.25em,level distance=1.8em]%
\node at (1,-1) [font = \normalsize] {$T_5$}
child{node [dummy] {}
	child[sibling distance = 4em]{edge from parent node [swap] {$d$} }
	child[sibling distance = 3.5em]{edge from parent node [swap,near end] {$b$} }
	child[sibling distance = 4em]{ edge from parent node {$\phantom{d}a$} }
	edge from parent node [swap] {$r$}};
\end{scope}
\end{scope}
\begin{scope}[yshift=-0.5em,xshift=2.4em]
\begin{scope}[level distance=2.3em]
\tikzstyle{level 2}=[sibling distance=4em]%
\tikzstyle{level 3}=[sibling distance=2.75em]%
\tikzstyle{level 4}=[sibling distance=1.25em]%
\tikzstyle{level 5}=[sibling distance=0.875em]%
\node at (5.375,0) [font = \normalsize] {$T$}
	child{node [dummy] {}
		child[sibling distance = 4.25em]{node [dummy] {}
			child{node [dummy] {}
				child{node [dummy] {}}
			}
			child{node [dummy] {}
				child{node [dummy] {}}
				child{node [dummy] {}}
			edge from parent node [near end] {$c$}}
		edge from parent node [swap] {$d$}}
		child[sibling distance =3.5em]{node [dummy] {}
			child{node [dummy] {}
				child{node [dummy] {}}
			}
		edge from parent node [swap,near end] {$b$}}
	child[sibling distance =4em]{node [dummy] {}
		child{
		edge from parent node [near end,swap]{$l_3$}}
		child{node [dummy] {}
			child{
			edge from parent node [near end,swap]{$l_2$}}
			child{
			edge from parent node [near end]{$l_1$}}
		}
		edge from parent node {$\phantom{d}a$}}
	edge from parent node [swap] {$r$}};
\end{scope}
\begin{scope}[level distance=2.3em]
\tikzstyle{level 2}=[sibling distance=2.6em]%
\tikzstyle{level 3}=[sibling distance=1.45em]%
\node at (10.5,0.3) [font = \normalsize] {$J$}
	child{node [dummy] {}
		child{node [dummy] {}
			child{node [dummy] {}
			edge from parent node [swap] {$c$}}	
		edge from parent node [swap] {$d$}}
		child{node [dummy] {}
		edge from parent node [near end,swap] {$b$}}
		child{node [dummy] {}
			child{
			edge from parent node [near end,swap]{$l_3$}}
			child[level distance=2.75em]{
			edge from parent node [swap,near end]{$l_2$}}
			child{
			edge from parent node [near end]{$l_1$}}
		edge from parent node {$\phantom{d}a$}}
	edge from parent node [swap] {$r$}};
\end{scope}
\draw [->] (9.1,1) -- node {$\mathfrak{n}$} (7.6,1);
\end{scope}
\end{tikzpicture}
\]
\caption{Encoding a dendroidal necklace}
\label{FIGURE}
\end{figure}
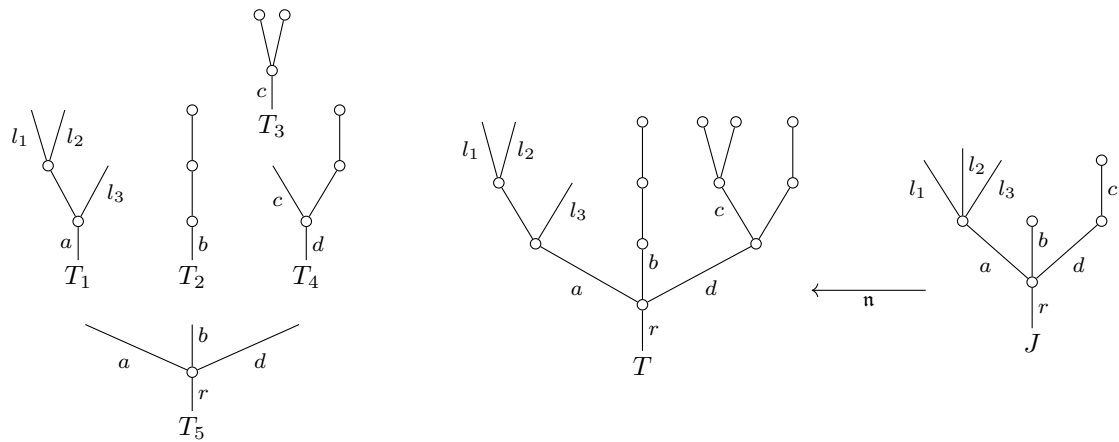
gives rise to a dendroidal necklace
(where $\amalg_e$ denotes gluing along the edge $e$)
\begin{equation}\label{DENDNECBAD EQ}
\Omega[T_1] \amalg_a 
\left(
\Omega[T_2] \amalg_b
\left(
\Omega[T_3] \amalg_c
\Omega[T_4] \amalg_d
\Omega[T_5]
\right)
\right)
\end{equation}
However, in practice \eqref{DENDNECBAD EQ} is rather awkward
to work with due to the need to include brackets, 
as well as the existence of distinct bracketing orders.
To address this, we will prefer a different presentation of dendroidal necklaces.
First, note that by gluing (also known as \textit{grafting}) the trees $T_i$ in 
Figure \ref{FIGURE} as suggested by the arrangement, 
one obtains the tree $T$ therein.
In addition, the tree $J$ encodes the \emph{arrangement}
of the $T_i$ itself. 
To make this more precise,
note that $J$ can be obtained by replacing each $T_i$ 
in the left arrangement with the corolla (i.e. tree with a single vertex) with the same number of leaves,
and then performing the grafting.
Moreover, this procedure gives rise to the indicated map
$\mathfrak{n} \colon J \to T$
in Figure \ref{FIGURE},
which completely encodes the left arrangement of the $T_i$: 
inner edges of $J$ encode the gluing edges;
the vertices of $J$ are in natural bijection with the set of the $T_i$; the $T_i$ themselves are the (outer) subtrees of $T$ whose outer edges (i.e. leaves and root)
are the image under $\mathfrak{n}$
of the corresponding vertex of $J$.
As such, we will regard such maps 
$\mathfrak{n} \colon J \to T$ themselves 
as our description of a dendroidal necklace,
cf. Definition \ref{NECKLACE_DEF}
(more precisely, necklaces are then the 
\emph{planar inner face maps} in $\Omega$).
We note that, should all $T_i$ be linear trees, so that the dendroidal necklace is one of the simplicial necklaces
$\Delta^{n_1} \vee \cdots \vee \Delta^{n_k}$,
the edges of $J$ (which is then also linear) correspond to the joints of the necklace. 
As such, we refer to the tree $J$ in a necklace as the 
\emph{tree of joints}. Similarly, we call the $T_i$ the \emph{beads} of the necklace, where we require that beads $T_i$ always have at least one vertex (generalizing the $n_i>0$ requirement in the simplicial context).

We end this introduction by observing that our presentation of necklaces as maps $\mathfrak{n} \colon J \to T$
foreshadows our approach throughout the paper.
More precisely, all our main constructions and proofs
(e.g. Definition \ref{NWTNS DEF} and
Proposition \ref{NWTNS_NAT_PROP})
are formal consequences of a standard factorization of maps in the category $\Omega$ of trees, cf. Proposition \ref{TREEFACT_PROP}.
Notably, this is rather different from the approach in \cite{DS11},
despite our approach broadly paralleling theirs,
and we believe this more formal approach is of intrinsic value,
as it may prove easier to generalize to other contexts.

\subsection{Main result}
\label{MAINRESULT_SEC}

As noted following \eqref{SOPDSET_EQ},
the nerve $N\colon \mathsf{Op} \to \mathsf{dSet}$
is fully faithful.
Moreover, its (essential) image
can be characterized as those 
$X \in \mathsf{dSet}$ satisfying a strict Segal condition,
recalled in \eqref{SCV1 EQ},\eqref{SCV2 EQ}.
As such, we will throughout make use of the following trick:
rather than describe an operad $\O \in \mathsf{Op}$,
we directly build its nerve 
$N\O \in \mathsf{dSet}$ as a presheaf,
then check that the described $N\O$ satisfies the required strict Segal condition.
The advantage of this trick is that it provides rather compact descriptions of the main operads we care about 
(cf. Definition \ref{NWTNS DEF}).
For instance, 
the operad $\Omega(T)$ appearing in 
\eqref{TWONER EQ},\eqref{WXDEF_EQ}
is characterized by the identification $N\Omega(T) =\Omega[T]$
(see also Example \ref{OMTSEG EX}), 
where we recall that $\Omega[T]\in \mathsf{dSet}$
is simply the representable $\Omega[T](-) = \Omega(-,T)$.

In addition, recalling that $\sOp$ can be viewed as the subcategory
of $\Op^{\Delta^{op}}$ such that the set of objects is constant in the simplicial direction,
one likewise has a fully faithful inclusion
$N\colon \sOp \to \mathsf{sdSet} = \mathsf{dSet}^{\Delta^{op}}$
with essential image those
$X \in \mathsf{sdSet}$
which both satisfy the strict Segal condition on each simplicial level and have constant object set, 
cf. Remark \ref{SIMPOPREM}.
Using the trick above,
one has the following compact description
of the simplicial operads $W(T) \in \sOp$ in \eqref{WXDEF_EQ}.

\begin{customprop}{A}\label{PROPA PROP}
The simplicial operad $W(T) \in \sOp$
(cf. \cite[(4.1)]{CM13b})
has nerve given by
\begin{equation}\label{PROPA EQ}
	\left(NW(T)\right)_{n}(S)
=
	\left\{
	\text{composable strings }
	S \xrightarrow{t} 
	J_0 \xrightarrow{i,p} 
	J_1 \xrightarrow{i,p} 
	\cdots \xrightarrow{i,p}
	J_n \xrightarrow{f,p}
	T
	\text{ of arrows in $\Omega$}
\right\}
\end{equation}
where we label maps in $\Omega$ as
$t/i/f/p$
to indicate they are 
tall/inner faces/faces/planar\footnote{
We expect most readers will be familiar with inner faces, faces, and planar maps. 
As for tall maps, they are defined 
as those maps in $\Omega$
that send the root to the root and leaves to leaves.
}
(cf. \S \ref{TREES_SEC}).
\end{customprop}
The description in \eqref{PROPA EQ}
makes heavy use of the standard factorization of maps in $\Omega$,
recalled in Proposition \ref{TREEFACT_PROP}.
As usual, the simplicial operators simply forget or replace the $J_i$.
Functoriality of \eqref{PROPA EQ} on both $S$ and $T$
is a consequence of the properties of said factorization,
and is described in \eqref{NWFUNC EQ},\eqref{NWTTSTAR EQ}.
Likewise, the properties that 
$NW(T)$ is levelwise Segal and 
has constant object set, also following from properties of the factorization, are discussed in Remark \ref{NWTNS_REM}.

For how \eqref{PROPA EQ}
recovers the original description of $W(T)$ in \cite[(4.1)]{CM13b},
see Example \ref{WREP_EX}.

\vskip 10pt

	By combining Proposition \ref{PROPA PROP} and \eqref{WXDEF_EQ}
	we now have a full definition of the functor
	$W_! \colon \mathsf{dSet} \to \sOp$,
	the explicit description of which is the goal of our main result, Theorem \ref{THMB}.

	Before stating that result, we need additional notation.
	For a necklace $\mathfrak{n}\colon J \to T$ as in Figure \ref{FIGURE},
	we write $\Omega[\mathfrak{n}] \in \dSet$
	for the dendroidal set in \eqref{DENDNECBAD EQ}
	(cf. Definitions \ref{NECKLACE_DEF} 
	and \ref{NECKREP_DEF}),
	and let $\mathsf{Nec} \subset \dSet$
	be the full subcategory spanned by the $\Omega[\mathfrak{n}]$. 
	The description of $W_!$ in Theorem \ref{THMB}
	will rely on a description 
	of $W_!(\Omega[\mathfrak{n}])$
	for $\mathfrak{n}$ a necklace 
	(this is elaborated on after \eqref{KEYSTRAT EQ}).
	The real appeal of \eqref{PROPA EQ}
	is then that it can easily be modified to describe
	$W_!(\Omega[\mathfrak{n}])$.
        
	Specifically,
	$N W_!(\Omega[\mathfrak{n}])$ is the subpresheaf of 
	$NW(T)$ in \eqref{PROPA EQ}
	obtained by imposing an additional condition
	which is closely related to the characterization of 
	maps between necklaces
	given in Proposition \ref{MAPNECK PROP}(ii).
	Should the map $S\to T$ in \eqref{PROPA EQ} be a tall map,
	this additional is that $J_0 \supseteq J$.
	Otherwise, one needs a more complex condition 
	$J_0 \supseteq J_{\overline{\phi S}}$
	(cf. Remark \ref{J0JFREPACK REM}),	
	related to 
	``outer faces'' of the necklace $\mathfrak{n}\colon J\to T$.
	See \eqref{NECKOUTF EQ} for a depiction of this notion
	of outer face.

The following is our main result, 
giving an explicit description of the
functor $W_!\colon \mathsf{dSet} \to \mathsf{sOp}$
based on \eqref{PROPA EQ}.

\begin{customthm}{B}[{cf. \cite[Thm. 1.3 and Cor. 4.4]{DS11}}]
	\label{THMB}
	Let $X\in \mathsf{dSet}$.
	Then $W_!(X) \in \sOp$ is the simplicial operad whose 
	nerve is described as follows.
	The simplices in the $n$-th level
	$NW_!(X)_{n}(S)$
	are equivalence classes of quadruples
	$(\mathfrak{n}, S \xrightarrow{\phi} T, \Omega[\mathfrak{n}] \xrightarrow{x} X, J_{\bullet})$ 
	where:
\begin{enumerate}[label=(\roman*)]
	\item $(\mathfrak{n}\colon J \to T) \in \mathsf{Nec}$ is a necklace; 
	\item $S \xrightarrow{\phi} T$
		is a tall map in $\Omega$
		such that $J \supseteq \phi(S)$;		
	\item $\Omega[ \mathfrak{n}] \xrightarrow{x} X$ is a map in $\mathsf{dSet}$;
	\item $J_{\bullet}$ denotes a 
		factorization of $\phi$ as below,
		and for which $J_0 \supseteq J$.
		Arrow labels have the same meaning as in 
		Proposition \ref{PROPA PROP}
		(note that the label of the last map differs from
		\eqref{PROPA EQ}).
\begin{equation}
\begin{tikzcd}
	S \ar{r}{t}
&
	J_0 \ar{r}{i,p}
&
	J_1 \ar{r}{i,p}
	&
\cdots
	\ar{r}{i,p}
&
	J_n \ar{r}{i,p}
&
	T
\end{tikzcd}
\end{equation}
\end{enumerate}
	The equivalence relation is generated by considering 
	$(\mathfrak{n},\phi,x,J_{\bullet})$ and
	$(\mathfrak{n}',\phi',x',J_{\bullet}')$
	to be equivalent if there is
	a map
	$\varphi \colon \Omega[\mathfrak{n}] \to \Omega[\mathfrak{n}']$
	(encoded by a map $\varphi \colon T \to T'$,
	 cf. Proposition \ref{MAPNECK PROP}(i))
	such that
	$\phi' = \varphi \phi$,
	$x = x' \varphi $
	and
	$J'_k = \varphi J_k$
	(i.e $J'_{\bullet}$
	is obtained by pushing 
	$J_{\bullet}$ along $\varphi$
	in the sense of \eqref{NWTTSTAR EQ}).
	
	Moreover, all such data have a representative, 
	\emph{unique up to isomorphism},
	for which:
	$J_{\bullet}$ is \emph{flanked},
	i.e. $J_0=J$ and $J_n=T$,
	and;
	$x$ is \emph{totally non-degenerate},
	i.e. for all beads 
	$T_b, b \in \boldsymbol{V}(J)$ of 
	$\mathfrak{n}$
	the dendrex
	$\Omega[T_b] \to \Omega[\mathfrak{n}] \to X$
	is non-degenerate. 
\end{customthm}

In the following, $\eta \in \Omega$
denotes the \emph{stick tree} with one edge and no vertices.

\begin{remark}\label{READMAPSP REM}
	The set of objects of $W_! X$ is simply the set
	$NW_! X(\eta) = X(\eta)$.
	Moreover, for each $X(\eta)$-signature, 
	i.e. tuple $(x_1,\cdots,x_n;x_0)$ with $x_i \in X(\eta)$,
	the space of maps
	$(W_! X)(x_1,\cdots,x_n;x_0) \in \mathsf{sSet}$
	is read off of Theorem \ref{THMB}
	by setting $S=C_n$ to be the $n$-corolla
	(i.e the tree with $n$ leaves and exactly one vertex)
	and restricting to those quadruples where the composite
	$\coprod_{\{0,1,\cdots,n\}}\Omega[\eta] 
	\to \mathsf{Sc}[C_n] \to 
	\Omega[\mathfrak{n}] \to X$
	is the signature $(x_1,\cdots,x_n;x_0)$.
\end{remark}

We now summarize the proof strategy for Theorem \ref{THMB},
which can be visualized by following diagram.
\begin{equation}\label{KEYSTRAT EQ}
\begin{tikzcd}
\Omega \ar[hookrightarrow]{r}
\ar{rrd}[swap]{W} 
&
\mathsf{Nec}
\ar[hookrightarrow]{r}
\ar{rd}{W}
&
\mathsf{dSet} \ar{rd}{W}
\ar[dashed]{d}{W}
\\
&&
\mathsf{sOp} \ar[hookrightarrow]{r}[swap]{N} &
\mathsf{sdSet}
\end{tikzcd}
\end{equation}
First, 
we extend \eqref{PROPA EQ} to a functor
$W \colon \mathsf{Nec} \to \sOp$
via direct construction
in Definition \ref{NWTNS DEF},
and then show that this functor is the left Kan extension of 
its restriction to 
$\Omega \hookrightarrow \mathsf{Nec}$,
cf. Proposition \ref{NECKCOL PROP}.
The point of this is then as follows.
Defining $W \colon \mathsf{dSet} \to \mathsf{sdSet}$
by making the right rhombus above into a left Kan extension diagram,
one has that:
\begin{enumerate*}
\item
$W \colon \mathsf{dSet} \to \mathsf{sdSet}$
actually lands in the essential image
of $N \colon \sOp \to \mathsf{sdSet}$,
cf. Proposition \ref{NWKANEX_PROP},
implicitly defining 
$W \colon \mathsf{dSet} \to \sOp$
and ensuring that the middle triangle is also a left Kan extension;
\item
the functor 
$W \colon \mathsf{dSet} \to \mathsf{sdSet}$
is easy to compute, due to being a left Kan extension onto a presheaf category,
so that the description in Theorem \ref{THMB}
is then mostly a matter of unpacking notation,
as done in Corollary \ref{NWXREPS COR}.
\end{enumerate*}
Crucially, we note that (i) would fail
if left Kan extending directly from 
$\Omega$ to $\dSet$. 
Lastly, the choice of the preferred 
flanked and totally non-degenerate 
representatives is addressed in 
Corollary \ref{NWXREPS2_COR}.

\section{Preliminaries}

\subsection{The category of trees}
\label{TREES_SEC}

We begin by recalling the Moerdijk-Weiss category $\Omega$ of trees
\cite{MW07}.
First, each object of $\Omega$ can be encoded by 
a (rooted) tree diagram $T$ as below.
\begin{equation}\label{eq:TREE}
	\begin{tikzpicture}[auto,grow=up, level distance = 2.2em,
	every node/.style={font=\scriptsize,inner sep = 2pt}]%
	\tikzstyle{level 2}=[sibling distance=4em]%
	\tikzstyle{level 3}=[sibling distance=2.25em]%
            \node [font=\normalsize] {$T$}
            child{node [dummy] {}
              child{node [dummy] {}
                edge from parent node [swap] {$e$}
              }
              child[level distance = 2.9
              em]{edge from parent node [swap] {$d$}}
              child{node [dummy] {}
                child{edge from parent node [near end, swap] {$b$}}
                child{edge from parent node [near end] {$\phantom{b}a$}}
                edge from parent node {$c$}
              }
              edge from parent node [swap] {$r$}
            };        
      \end{tikzpicture}
\end{equation}
Edges with no vertices $\circ$ above them are called \textit{leaves}, the unique bottom edge is called the \textit{root},
and edges that are neither are called \textit{inner edges}.
In the example above, $a$, $b$ and $d$ are leaves, $r$ is the root, and $c$ and $e$ are inner edges.
The sets of edges, inner edges, and vertices of a tree $T$ are denoted 
$\boldsymbol{E}(T)$, 
$\boldsymbol{E}^{\mathsf{i}}(T)$, 
and $\boldsymbol{V}(T)$, respectively.

While the tree diagram description above is helpful for visualizing objects in $\Omega$,
in order to describe the arrows,
we will use the algebraic notion of
a \emph{broad poset},
originally due to Weiss \cite{Wei12}
and further developed in \cite{Per18},
which we now briefly recall.
For each edge $t$ in a tree topped by a vertex $\circ$, we write
$t^{\uparrow}$
for the tuple of edges immediately above $t$.
In \eqref{eq:TREE} one has  
$r^{\uparrow} = cde$, 
$c^\uparrow = ab$, 
and $e^\uparrow = \epsilon$,
where $\epsilon$ denotes the empty tuple.
Each vertex can then be encoded symbolically as
$t^{\uparrow} \leq t$,
which we call a 
\emph{generating broad relation}.
This notation is motivated by a form of transitivity.
For example,
in \eqref{eq:TREE}
the relations
$cde \leq r$ and $ab \leq c$
generate, under \emph{broad transitivity},
the relation $abde \leq r$,
and one may similarly obtain relations
$cd \leq r$ and $abd \leq r$.
These relations, together with identity relations $t \leq t$,
then form the \emph{broad poset associated with $T$}.

A map of trees $\varphi \colon S \to T$
in $\Omega$ is then an underlying map
of edge sets 
$\varphi \colon \boldsymbol{E}(S) \to \boldsymbol{E}(T)$
which preserves broad relations.

If an edge $t$ is pictorially above (or equal to) an edge $s$, we write $t \leq_d s$.
Equivalently, $t \leq_d s$ if there exists a broad relation $s_1\dots s_n \leq s$ such that $t = s_i$ for some $i$.\\

Our discussion will be simplified by assuming 
that $\Omega$ has exactly one representative of 
each \emph{planarized tree},
by which we mean a tree together with a planar representation as in \eqref{eq:TREE}.
Any map  
$\varphi \colon S \to T$ in $\Omega$
then has a unique factorization
$S \xrightarrow{\simeq} S' \to T$
as an isomorphism followed by a \emph{planar map}
\cite[Prop. 3.24]{BP_geo}.
In particular, the wide subcategory of $\Omega$ spanned by planar maps is skeletal,
i.e. the only planar isomorphisms are the identities.

\begin{notation}
	We write $\eta$ for the \textit{stick} tree, the unique tree with a single edge and no vertices.
\end{notation}

\begin{example}\label{TREEMAP_EX}
	The edge labels in each tree $S_i$ below determine maps
	$\boldsymbol{E}(S_i) \to \boldsymbol{E}(T)$,
	where $T$ is as in \eqref{eq:TREE}.
	For $i \leq 5$ this encodes maps
	$S_i \to T$ in $\Omega$,
	but not for $i=6$.
\begin{equation}
\begin{tikzpicture}[auto, grow=up, level distance = 2.2em,
	every node/.style={font=\scriptsize,inner sep = 2pt}]
\tikzstyle{level 2}=[sibling distance=2em]%
\tikzstyle{level 3}=[sibling distance=2.25em]%
\node at (0,0) [font=\normalsize] {$S_1$} %
	child{node [dummy] {}
		child{edge from parent node [swap] {$d$}}
		child [level distance = 2.7em] {edge from parent node [swap,near end] {$b$}}
		child{edge from parent node {$a$}}
	edge from parent node [swap] {$r$}
	};
\tikzstyle{level 2}=[sibling distance=2.5em]%
\tikzstyle{level 3}=[sibling distance=1.75em]%
\node at (2.5,0) [font=\normalsize] {$S_2$} 
	child{node [dummy] {}
		child{
		edge from parent node [swap] {$e$}
		}
		child[level distance = 2.5em]{
		edge from parent node [swap,pos=0.65] {$d$}
		}
		child{node [dummy] {}
			child{edge from parent node [swap,near end] {$b$}}
			child{edge from parent node [near end] {$\phantom{b}a$}}
		edge from parent node {$c$}
		}
	edge from parent node [swap] {$r$}
	};
\tikzstyle{level 2}=[sibling distance=2em]%
\node at (5,0) [font=\normalsize] {$S_3$} 
	child{node [dummy] {}
		child{
		edge from parent node [swap] {$e\phantom{a}$}
		}
		child [level distance = 2.7em]{edge from parent node [swap, very near end] {$d$}}
		child [level distance = 2.7em]{edge from parent node [very near end] {$b$}}
		child{edge from parent node {$\phantom{e}a$}}
	edge from parent node [swap] {$r$}
	};
\tikzstyle{level 2}=[sibling distance=3em]%
\node at (8,0) [font=\normalsize] {$S_4$} 
	child{node [dummy] {}
		child{node [dummy] {}
		edge from parent node [swap] {$e$}
		}
		child[level distance = 2.5em]{node [dummy] {}
			child[level distance = 2.2em]{
			edge from parent node [swap] {$d'$}}
		edge from parent node [swap,pos=0.65] {$d$}
		}
		child{node [dummy] {}
			child{edge from parent node [swap,near end] {$b$}}
			child{edge from parent node [near end] {$\phantom{b}a$}}
		edge from parent node {$c$}
		}
	edge from parent node [swap] {$r$}
	};
\tikzstyle{level 2}=[sibling distance=2.5em]%
\node at (10.75,0) [font=\normalsize] {$S_5$} %
	child{node [dummy] {}
		child{edge from parent node [swap] {$e$}}
		child[level distance = 2.5em]{node [dummy] {}
			child[level distance = 2.2em]{edge from parent node [swap] {$d'$}}
		edge from parent node [swap,pos=0.65] {$d$}
		}
		child{edge from parent node {$c$}}
	edge from parent node [swap] {$r$}
	};
\tikzstyle{level 2}=[sibling distance=3em]%
\node at (13.15,0) [font=\normalsize] {$S_6$}
	child{node [dummy] {}
		child{edge from parent node [swap,near end] {$d\phantom{c}$}}
		child{node [dummy] {}
			child{edge from parent node {$b$}}
		edge from parent node [near end] {$\phantom{d}c$}
		}
	edge from parent node [swap] {$r$}
	};
\end{tikzpicture}
\end{equation}
\end{example}

\begin{definition}
        \label{TREEMAP_DEF}
        A map of trees $\varphi \colon S \to T$ is called:
        \begin{itemize}
        \item a \textit{tall map} if
                $\varphi(\underline{l}_S) = \underline{l}_T$ and $\varphi(r_S) = r_T$,
                with $\underline{l}_{(-)}$ and $r_{(-)}$ denoting the tuple of leaf edges and the root edge;
        \item a \textit{face map} if it is injective on edges;
                an \textit{inner face} if it is also tall; and
                an \textit{outer face} if, for any factorization
                $\varphi \simeq \varphi_1\varphi_2$
                with $\varphi_1,\varphi_2$ face maps
                and $\varphi_2$ inner, 
                $\varphi_2$ is an isomorphism;
        \item a \textit{degeneracy} if it is surjective on edges and preserves leaves
                (and is thus tall);
        \item a \textit{convex map} if,
        	whenever $e <_d e' <_d e''$ in $T$ 
        	and $e,e''$ are in the image of $\varphi$,
        	then so is $e'$.
        \end{itemize}
\end{definition}

Pictorially, inner face maps 
$S \to T$ remove some edges in $T$
(and merge the vertices adjacent to those edges),
outer face maps remove some vertices of $T$,
and degeneracies collapse some of the unary vertices of $S$.
Face maps combine inner and outer faces,
tall maps combine inner faces and degeneracies, 
and convex maps combine outer faces and degeneracies (cf. Remark \ref{TREEFACTNAMES_REM}).

\begin{example}
	In Example \ref{TREEMAP_EX},
	$S_1 \to T$ is an inner face,
	$S_2 \to T$ is an outer face,
	$S_3 \to T$ is a face that is neither inner nor outer,
	$S_4 \to T$ is a degeneracy,
	and $S_5 \to T$ is a convex map.
\end{example}

\begin{notation}\label{MAPLABELS_NOT}
	Throughout the remainder of the paper,
        we will label a 
        map in $\Omega$
        by the letters d/i/o/t/f/p
        to indicate that the map is
        a degeneracy/inner face/outer face/tall/face/planar.
\end{notation}

\begin{proposition}[{\cite[Prop. 2.2]{BP_edss}}]
      \label{TREEFACT_PROP}
      A map of trees $\varphi \colon S \to T$ 
      has a strictly unique factorization
      \begin{equation}\label{TREEFACT_EQ}
              S \xrightarrow{\simeq}
              S_p \xrightarrow{pd} 
              \varphi S \xrightarrow{pi} 
              \overline{\varphi S} \xrightarrow{po} T
      \end{equation}
      as an isomorphism followed by a planar degeneracy, a planar inner face, and a planar outer face.
\end{proposition}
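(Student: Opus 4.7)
The plan is to reduce to the planar case and then construct the intermediate trees canonically from $\varphi$. By the uniqueness of the isomorphism/planar factorization noted just above the statement, any $\varphi\colon S \to T$ factors as $S \xrightarrow{\simeq} S_p \to T$ with the latter map planar, so it suffices to produce, for each planar map $\psi\colon S_p \to T$, a canonical factorization $S_p \xrightarrow{pd} \psi S_p \xrightarrow{pi} \overline{\psi S_p} \xrightarrow{po} T$ and verify its strict uniqueness.

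The first intermediate tree $\psi S_p$ is the image: its edge set is $\psi(\boldsymbol{E}(S_p)) \subseteq \boldsymbol{E}(T)$, and its broad relations are generated by the $\psi$-images of the generating broad relations of $S_p$. I would check that this really does describe a planar subtree of $T$, with unique root $\psi(r_{S_p})$ and leaves exactly $\psi(\underline{l}_{S_p})$; granting this, the corestricted map $S_p \to \psi S_p$ is by construction surjective on edges and preserves both leaves and root, hence is a planar degeneracy in the sense of Definition \ref{TREEMAP_DEF}.

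Next, I would define $\overline{\psi S_p} \subseteq T$ by adjoining to $\psi S_p$ every inner edge $e$ of $T$ sandwiched between two edges of $\psi S_p$ in the pictorial order $\leq_d$. The inclusion $\psi S_p \hookrightarrow \overline{\psi S_p}$ is then tall and injective on edges, i.e.\ a planar inner face, while $\overline{\psi S_p} \hookrightarrow T$ is a face map admitting no nontrivial further inner-face factorization (any inner edge one could remove has already been removed in the step before), hence a planar outer face.

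For strict uniqueness one reverses the construction: in any factorization of the stated form, the final outer face must contain $\psi(\boldsymbol{E}(S_p))$, and outerness together with the $\leq_d$-convexity condition pins its source down to $\overline{\psi S_p}$; an analogous argument forces the middle inner face to be $\psi S_p \hookrightarrow \overline{\psi S_p}$, and the initial planar degeneracy is then the unique edge-surjective corestriction of $\psi$ onto $\psi S_p$. The main obstacle is the very first step: one must carefully verify, using broad transitivity and the fact that $\psi$ preserves broad relations, that the candidate edge set $\psi(\boldsymbol{E}(S_p))$ with the induced broad relations really does assemble into a subtree of $T$ whose leaf set is exactly $\psi(\underline{l}_{S_p})$. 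Without this leaf identification the first factor would be only a planar surjection rather than a planar degeneracy, and the whole factorization would collapse.
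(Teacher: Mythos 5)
The paper does not prove this proposition; it is imported verbatim from \cite[Prop. 2.2]{BP_edss}, so there is no in-paper proof to compare against. Taken on its own terms, your approach is the natural one (reduce to planar maps via \cite[Prop. 3.24]{BP_geo}, construct $\psi S_p$ as the image and $\overline{\psi S_p}$ as its convex/outer closure, then argue uniqueness by showing any valid factorization forces the same two intermediate trees), and you correctly flag the load-bearing technical claim that you leave open: that $\psi(\boldsymbol{E}(S_p))$ with the induced relations is a genuine planar face of $T$ with root $\psi(r_{S_p})$ and leaf tuple $\psi(\underline l_{S_p})$. That is indeed the heart of the matter, and leaving it unverified means the proof is not complete.

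Two points worth tightening if you fill the gap. First, be careful to distinguish \emph{face} from \emph{outer face} (your phrase ``planar subtree of $T$'' is ambiguous): the tree $\psi S_p$ is a face of $T$ but generally not a convex one, since inner edges of $T$ sandwiched between images are precisely what is missing, and those are only added at the $\overline{\psi S_p}$ stage. Second, your definition of $\psi S_p$'s broad poset as ``generated by the $\psi$-images of the generating broad relations of $S_p$'' must be reconciled with the structure $\psi S_p$ inherits as a face of $T$ (which is the restriction of $T$'s broad relations to $\psi(\boldsymbol E(S_p))$); showing these two descriptions agree is essentially equivalent to the root/leaf claim, and both ultimately reduce to the fact that $\psi$ preserving broad relations forces the set $\psi(\boldsymbol E(S_p))$ to satisfy the closure properties of a broad subposet of $T$ of tree type. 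Once that lemma is in place, the corestriction is a degeneracy by construction, the two inclusions are inner and outer faces by the convexity argument you sketch, and strict uniqueness follows from planarity exactly as you indicate.
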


\begin{notation}\label{TREEFACT NOT}
      The notation $\varphi S$ is motivated by the fact that this tree has edge set
      $\boldsymbol{E}(\varphi S) = \varphi (\boldsymbol{E}(S))$,
      while the 
      notation $\overline{\varphi S}$ is an instance of the 
      \emph{outer closure of a face}
      notation in \cite[Not. 2.14]{BP_edss}
      which, for a face $F$, defines $\overline{F}$
      as the smallest (planar) outer face containing $F$.
\end{notation}

\begin{remark}[{cf. \cite[Rems. \ref{TAS-TODF REM}, \ref{TAS-CNVXM REM}, \ref{TAS-TREEMAPCOMP_REM}]{BP_TAS}}]
        \label{TREEFACTNAMES_REM}
    For any subset
	$\mathcal{S} \subseteq \{\simeq,pd,pi,po\}$
	of the arrow labels 
	in \eqref{TREEFACT_EQ},
	the type of maps whose
	factors labeled by $\mathcal{S}$ are identities 
	is closed under composition.

        In particular, as (non-planar) tall maps (resp. face maps, convex maps) are characterized as those maps such that
        the component labeled $po$ (resp. $pd$, $pi$)
        in \eqref{TREEFACT_EQ}
        is the identity,
        we have that these types of maps (and their planar analogues) are closed under composition.
\end{remark}

\begin{remark}\label{IGNPL REM}
	Modifying \eqref{TREEFACT_EQ} by ignoring planarity
	gives a factorization 
	$S \xrightarrow{d} U \xrightarrow{i} V
	\xrightarrow{o} T$,
	unique up to unique isomorphisms.
	Moreover, combining the $i$ and $o$ arrows
	recovers the usual degeneracy-face decomposition
	in \cite[Lemma 3.1]{MW07},
	while combining the $d$ and $i$ arrows recovers the 
	tall-outer decomposition in \cite[Prop. 3.36]{BP_geo}.
\end{remark}

\begin{notation}\label{LEAFROOT NOT}
	A \textit{corolla} is a tree with a single vertex.
	For each $n\geq 0$, one has a corolla $C_n$ with $n$ leaves,
	and we write $\Sigma$ for the category of corollas and isomorphisms, which is naturally identified with the category of standard finite sets
	$\{1,2,\cdots,n\}$ and isomorphisms.
	
	For any tree $T$ with $n$ leaves, 
	we write $\mathsf{lr}(T)$,
	which we call the \emph{leaf-root of $T$},
	for the corolla $C_n$,
	which comes together with a unique planar tall map
	$\mathsf{lr}(T) \to T$.
\end{notation}

\begin{example}
	For the tree $T$ in \eqref{eq:TREE},
	the corolla $\mathsf{lr}(T)$ is $S_1$
	in Example \ref{TREEMAP_EX}.
\end{example}

\begin{notation}\label{TV_NOT}
	For a tree $T$ and $v \in \boldsymbol{V}(T)$,
	we write $T_v \to T$ for the planar outer face consisting of only this vertex and its adjacent edges.
	Further, for a map 
	$\varphi \colon J\to T$ and $b \in \boldsymbol{V}(J)$,
	we write
	$T_b = \overline{\varphi J_b}$.
	Compare with the notion of \emph{bead}
	in Definition \ref{NECKLACE_DEF}(ii)
	and Figure \ref{FIGURE}.
\end{notation}

\subsection{Dendroidal sets and operads}
\label{DSETSOPS SEC}

This subsection recalls the definitions
of the key categories appearing in the main adjunctions \eqref{SOPDSET_EQ}.
First, the category of 
\emph{dendroidal sets}
is the category $\dSet = \Set^{\Omega^{op}}$
of presheaves on $\Omega$.

There are a number of presheaves that play a key role in the theory of dendroidal sets.
First, for each tree $T\in \Omega$,
one has the representable presheaf
$\Omega[T](S) = \Omega(S,T)$.
Moreover, one has the following subpresheaves of 
$\Omega[T]$,
called the \emph{boundary},
\emph{inner horn},
and \emph{Segal core} 
\begin{equation}\label{SUBPRESH EQ}
\partial \Omega[T] = \bigcup_{U \in \mathsf{Face}(T), U\neq T} \Omega[U],
	\qquad
\Lambda^E[T] = \bigcup_{U \in \mathsf{Face}(T),
	U \not \hookrightarrow T-E} \Omega[U],
	\qquad
Sc[T] = \bigcup_{U \in \mathsf{Face}_{sc}(T)} \Omega[U],
\end{equation}
where $\mathsf{Face}(T)$ is the poset of planar faces,
$\emptyset \neq E \subseteq \boldsymbol{E}^{\mathsf{i}}(T)$
is a non-empty set of inner edges,
and 
$\mathsf{Face}_{sc}(T)$ is the poset of planar outer faces with no inner edges
(i.e. $U$ with either a single edge or a single vertex).
Typically $\partial \Omega[T]$ and $\Lambda^E[T]$
are the main objects of interest
(see, e.g. \cite[\S 4]{BP_edss}, for further discussion),
but in this paper the $Sc[T]$ play the central role,
partly due to $Sc[T]$ being a necklace, 
cf. Remark \ref{SEGISNEC REM},
and partly since they appear in the Segal condition below.

Given $X,A \in \mathsf{dSet}$,
let us abbreviate $X(A) = \mathsf{dSet}(A,X)$.
We then say that $X$ satisfies the 
\emph{strict Segal condition} if,
for any tree $T \in \Omega$,
the natural map below is an isomorphism.
\begin{equation}\label{SCV1 EQ}
X(T) = X(\Omega[T]) \xrightarrow{\simeq} X(Sc[T])
\end{equation}
As noted at the start of \S \ref{MAINRESULT_SEC},
we will identify the category $\Op$ of (colored) operads
with its essential image under the nerve 
$N \colon \Op \to \mathsf{dSet}$,
which consists of the objects satisfying 
the strict Segal condition \eqref{SCV1 EQ}
(more precisely, this follows from 
\cite[Prop. 5.3 and Thm. 6.1]{MW09}
together with Remark \ref{SSCOTHER REM} below).
For the usual description of $\Op$, see 
\cite[\S 1]{CM13b} or \cite[Def. \ref{OC-FREEOP DEF}]{BP_FCOP}.

Some of our arguments in \S \ref{WCONS SEC}
will be simplified by using an alternative formulation of \eqref{SCV1 EQ},
which is motivated by the fact that colored operads
$\Op$ are most commonly defined using colored trees.
As such, we first recall the following, cf. 
\cite[Def. \ref{OC-COLFOR DEF}]{BP_FCOP}.

\begin{definition}\label{CTREE_DEF}
	Let $\mathfrak C$ be a set of colors.
	The category $\Omega_{\mathfrak C}$ of \textit{$\mathfrak C$-trees} has objects pairs $(T, \mathfrak c)$ with 
	$T \in \Omega$ a tree and
	$\mathfrak c \colon \boldsymbol{E}(T) \to \mathfrak C$ a coloring of its edges,
	and arrows
	$(S, \mathfrak d) \to (T, \mathfrak c)$
	given by maps
	$\varphi \colon S \to T$ in $\Omega$ such that $\mathfrak d = \mathfrak c \varphi$.
	
	Moreover, any a map of color sets
	$f \colon \mathfrak C \to \mathfrak D$
	induces a functor
	$f\colon \Omega_{\mathfrak{C}} \to \Omega_{\mathfrak{D}}$
	via
	$(T,\mathfrak{c}) \mapsto (T,f\mathfrak{c})$.
\end{definition}

\begin{notation}\label{MAPSOVCOL NOT}
	Given $X \in \dSet$, tree $T \in \Omega$, 
	and coloring $\mathfrak c \colon \boldsymbol{E}(T) \to X(\eta)$,
	we write $X_{\mathfrak c}(T) \in \Set$ for the pullback below.
	\begin{equation}
	\label{XUDECALT EQ}
	\begin{tikzcd}
	X_{\mathfrak c}(T) \arrow[r] \arrow[d]
	&
	X(T) \arrow[d]
	\\
	\** \arrow{r}[swap]{\mathfrak c}
	&
	\displaystyle{
		\prod\limits_{\boldsymbol{E}(T)} X(\eta)
	}
	\end{tikzcd}
	\end{equation}
\end{notation}

\begin{remark}
	The notation above gives a decomposition
	$X(T) \simeq \coprod_{\{\mathfrak c \colon \boldsymbol{E}(T) \to X(\eta)\}} X_{\mathfrak c}(T)$
	for any $X\in \dSet$.
	Moreover, the assignment
	$(T,\mathfrak c) \mapsto X_{\mathfrak{c}}(T)$ 
	is functorial on
	$X(\eta)$-trees $(T,\mathfrak c) \in \Omega^{op}_{X(\eta)}$,
	so that $X$ has an equivalent description
	as a presheaf on $\Omega_{X(\eta)}$.
	In fact, a little more is true. 
	If one writes 
	$\mathsf{dSet}_{\mathfrak{C}} \subset \dSet$
	for the subcategory of those $X$ such that
	$X(\eta) = \mathfrak{C}$ and maps that are the identity on 
	$X(\eta)$, there is an equivalence of categories ({cf. \cite[ \eqref{TAS-PREOPCOLFIXEQ EQ}]{BP_TAS}})
\begin{equation}\label{DSETCID EQ}
\dSet_{\mathfrak C} \xrightarrow{\simeq}
 \Fun_{\**}(\Omega_{\mathfrak C}^{op}, \Set),
\qquad
\big(T \mapsto X(T)\big) \mapsto \big( (T, \mathfrak c) \mapsto X_{\mathfrak c}(T) \big)
\end{equation}
where $\Fun_{\**}$ denotes \emph{pointed functors},
i.e. functors $X$ such that
$X_{\mathfrak c}(\eta) = \**$
for any $\mathfrak{c} \in \mathfrak{C}$.
\end{remark}

Using the $X_{\mathfrak c}(T)$ notation, 
the Segal condition in \eqref{SCV1 EQ}
then decomposes into isomorphisms
	\begin{equation}\label{SCV2 EQ}
	X_{\mathfrak c}(T) \xrightarrow{\simeq} \prod_{v \in \boldsymbol{V}(T)} X_{\mathfrak c_v}(T_v)
	\end{equation}
for any $X(\eta)$-tree $(T,\mathfrak{c})$,
and with $\mathfrak{c}_v$
the restricted coloring
$\boldsymbol{E}(T_v) \to \boldsymbol{E}(T) \to X(\eta)$.

\begin{example}\label{OMTSEG EX}
	Representables $\Omega[S]$ satisfy
	the strict Segal condition \eqref{SCV2 EQ}.
	Explicitly, this Segal condition says
	that a map
	$T\to S$ in $\Omega$ is determined by maps
	$T_v\to S$, 
	which is the content of \cite[Prop. 5.11]{Per18}.
	The operad $\Omega(S)$
	such that $\Omega[S] = N \Omega(S)$
	is defined in \cite[\S 3]{MW07}.
\end{example}

\begin{remark}\label{PULLBACKS REM}
Given a map of colors 
$f \colon \mathfrak{C} \to \mathfrak{D}$,
the identification \eqref{DSETCID EQ}
and precomposition with 
$f\colon \Omega_\mathfrak{C} \to \Omega_\mathfrak{D}$
yield the left functor $f^{\**}$ below.
Moreover, $f^{\**}$ is clearly compatible with the Segal condition
\eqref{SCV2 EQ} so that, writing
$\Op_{\mathfrak{C}} = \dSet_{\mathfrak{C}} \cap \Op$,
one has the restricted $f^{\**}$ functor on the right.
\[
f^{\**} \colon \dSet_{\mathfrak{D}} \to \dSet_{\mathfrak{C}}
\qquad \qquad
f^{\**} \colon \Op_{\mathfrak{D}} \to \Op_{\mathfrak{C}}
\]
Note that maps $X \to Y$ in either $\dSet$ or $\Op$
over a color map $f$ are then in bijection with fixed color maps
$X \to f^{\**} Y$.
\end{remark}

\begin{remark}\label{SSCOTHER REM}
	Condition \eqref{SCV1 EQ} has other formulations.
	Indeed, by \cite[Props. 3.22, 3.31]{BP_edss}
	one may replace the Segal cores $Sc[T]$
	in \eqref{SCV1 EQ} with the inner horns $\Lambda^E[T]$,
	thus saying that $X$ has the strict
	right lifting property against the maps
	$\Lambda^E[T] \to \Omega[T]$.
	This shows that $\infty$-operads generalize operads,
	as the first are defined by the 
	non-strict version of this property \cite[\S 5]{MW09}.
\end{remark}

\begin{remark}\label{SIMPOPREM}
	When dealing with simplicial operads $\sOp$,
	we will also have need to discuss
	simplicial dendroidal sets
	$\mathsf{sdSet} = \mathsf{Set}^{\Delta^{op} \times \Omega^{op}}$,
	whose levels we write as
	$X_n(T)$ for $T \in \Omega$ and $[n] \in \Delta$.
	As noted in \S \ref{MAINRESULT_SEC},
	applying the nerve along each simplicial direction
	yields a fully faithful inclusion
	$N \colon \sOp \to \mathsf{sdSet}$
	with essential image those $X \in \mathsf{sdSet}$
	for which $X(\eta)$ is a discrete simplicial set
	and which satisfy the Segal condition 
	\eqref{SCV1 EQ},\eqref{SCV2 EQ}
	on each simplicial level
	(or equivalently, which satisfy
	\eqref{SCV1 EQ},\eqref{SCV2 EQ}
	when regarded as an identification of simplicial sets).
\end{remark}

\section{Dendroidal necklaces}

We now formalize the notion of dendroidal necklace
discussed in the introduction,
cf. Figure \ref{FIGURE}, 
thus generalizing the key notion in \cite{DS11}.
For the meaning of 
$\overline{\mathfrak{n}J_b}$, $T_b$, see
Notations \ref{TREEFACT NOT}, \ref{TV_NOT}.

\begin{definition}[{cf. \cite[\S 3]{DS11}}]
        \label{NECKLACE_DEF}
	A \emph{necklace} is 
	a planar inner face map
	$\mathfrak{n} \colon J \to T$
	in $\Omega$.
	Moreover:
	\begin{enumerate}[label = (\roman*)]
		\item 
		$J$ is called the \emph{inner face of joints} of the necklace;
		\item for each vertex $b \in \boldsymbol{V}(J)$,
		the outer face
		$\overline{\mathfrak{n} J_b} = T_b \hookrightarrow T$
		is called a \emph{bead} of the necklace,
		and we write
		$\boldsymbol{B}(\mathfrak{n}) 
		\simeq 
		\boldsymbol{V}(J)$
		for the set of beads.
	\end{enumerate}
\end{definition}

\begin{example}
	For $\mathfrak{n} \colon J \to T$ the necklace 
	in Figure \ref{FIGURE} the beads 
	are the trees $T_i$ depicted therein.
\end{example}

We now formalize the idea behind \eqref{DENDNECBAD EQ},
thus defining the presheaves $\Omega[\mathfrak{n}]$.
Recall, cf. \eqref{SUBPRESH EQ}, 
that the Segal core poset
$\mathsf{Face}_{sc}(J)$
consists of the planar outer faces of $J$ with no inner edges.

\begin{definition}
        \label{NECKREP_DEF}
	Given a necklace $\mathfrak{n} \colon J \to T$ 
	we define its representable presheaf
	$\Omega[\mathfrak{n}] \in \mathsf{dSet}$ by
	\begin{equation}
	\Omega[\mathfrak{n}] 
	= 
	\underset{U \in \mathsf{Face}_{sc}(J)}{\colim}
	\Omega[\overline{\mathfrak{n} U}]
	=
	\bigcup_{U \in \mathsf{Face}_{sc}(J)} 
	\Omega[\overline{\mathfrak{n} U}]
	\end{equation}
	where the union formula is taken inside $\Omega[T]$.
	
	The category $\mathsf{Nec}$ of necklaces is then the full subcategory of $\mathsf{dSet}$
	spanned by the $\Omega[\mathfrak{n}]$.
\end{definition}

\begin{remark}\label{GRAFT REM}
For any tall map $S \to T$ in $\Omega$,
\cite[Cor. 3.75]{BP_geo} says that
one has a decomposition
\[T= 
\underset{U \in \mathsf{Face}_{sc}(S)}{\colim}
\overline{\mathfrak{n} U}
\]
as a colimit in $\Omega$, 
which formalizes the grafting procedure in Figure \ref{FIGURE}.
Crucially, the relevance of Definition \ref{NECKREP_DEF} 
comes from the fact that the Yoneda $\Omega[-]$
does not preserve this decomposition.
\end{remark}

\begin{remark}\label{SEGISNEC REM}
	The $\Omega[\mathfrak{n}]$ presheaves for necklaces
	$\mathfrak{n}\colon J \to T$
	interpolate between the representable and 
	Segal core presheaves $\Omega[T]$ and $Sc[T]$
	in \S \ref{DSETSOPS SEC}.
	More explicitly,
	each tree $T \in \Omega$
	gives rise to necklaces
	$T \xrightarrow{=} T$ and
	$\mathsf{lr}(T) \to T$
	(cf. Notation \ref{LEAFROOT NOT})
	for which
\[
	\Omega[T \xrightarrow{=} T] = Sc[T],
\qquad
	\Omega[\mathsf{lr}(T) \to T] = \Omega[T].
\]
        In particular,
        one obtains a natural inclusion
        $\Omega \hookrightarrow \mathsf{Nec}$
        given by $T \mapsto \left( \mathsf{lr}(T) \to T \right)$.
        However, we caution that the assignment
        $T \mapsto Sc[T]$ is not functorial on $T$
        (more precisely, it is functorial only with respect to \emph{convex} maps of trees,
        in the sense of Definition \ref{TREEMAP_DEF}). 
\end{remark}

\begin{remark}\label{SEGFORNECK REM}
	If $X\in \dSet$ satisfies the Segal condition \eqref{SCV2 EQ}
	and $\mathfrak{n}\colon J \to T$ is a necklace, then	
	\[
                X_{\mathfrak{c}}(T)
                \simeq 
                \prod_{v \in \boldsymbol{V}(T)}
                X_{\mathfrak{c}_v}(T_v)
                \simeq
                \prod_{b \in \boldsymbol{B}(\mathfrak{n})}
                \prod_{v \in \boldsymbol{V}(T_b)}
                X_{\mathfrak{c}_v}(T_v)
                \simeq
                \prod_{b \in \boldsymbol{B}(\mathfrak{n})}
                X_{\mathfrak{c}_b}(T_b),
	\]
	so that the natural maps
	$X(T) \xrightarrow{\simeq} X(\Omega[\mathfrak{n}])$
	are isomorphisms, cf. \eqref{SCV1 EQ}.
\end{remark}


\begin{lemma}\label{FACEINNECK LEM}
	Let $\mathfrak{n} \colon J \to T$ be a necklace. Then
	\begin{enumerate}[label=(\roman*)]
		\item a face $U \hookrightarrow T$
		is in $\Omega[\mathfrak{n}]$
		iff its outer closure $\overline{U}$ is; 
		\item an outer face 
		$U = \overline{U} \hookrightarrow T$
		is in $\Omega[\mathfrak{n}]$ iff 
		$\boldsymbol{E}^{\mathsf{i}}(J) \cap 
		\boldsymbol{E}^{\mathsf{i}}(U) = \emptyset$;
		\item there is a decomposition
		$
		\boldsymbol{E}(T) \simeq
		\boldsymbol{E}(J) \amalg 
		\coprod_{b \in \boldsymbol{V}(J)}
		\boldsymbol{E}^{\mathsf{i}}(T_b)
		= 
		\boldsymbol{E}(J) \amalg 
		\coprod_{b \in \boldsymbol{B}(\mathfrak{n})}
		\boldsymbol{E}^{\mathsf{i}}(T_b)
		$.
	\end{enumerate}
\end{lemma}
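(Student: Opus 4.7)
The plan is to establish the parts in the order (iii), (ii), (i), since (iii) provides the combinatorial backbone and each subsequent part depends on the previous.

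For (iii), I would apply Remark \ref{GRAFT REM} to the tall map $\mathfrak{n} \colon J \to T$, yielding the decomposition $T = \colim_{U \in \mathsf{Face}_{sc}(J)} \overline{\mathfrak{n} U}$ in $\Omega$. Since $\mathsf{Face}_{sc}(J)$ consists of the stick edges $\eta_e$ for $e \in \boldsymbol{E}(J)$ together with the corollas $J_b$ for $b \in \boldsymbol{V}(J)$, this tells us that every edge of $T$ either lies in $\boldsymbol{E}(J)$ or belongs to some bead $T_b = \overline{\mathfrak{n} J_b}$. Tallness of $\mathfrak{n}$ implies $J$ and $T$ share leaves and root, so the leaves and root of each $T_b$ are precisely the edges of $J$ adjacent to $b$, and consequently $\boldsymbol{E}^{\mathsf{i}}(T_b) \subseteq \boldsymbol{E}(T) \setminus \boldsymbol{E}(J)$. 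Disjointness of these inner-edge sets across distinct beads follows from the colimit presentation, since any shared edge between $T_b$ and $T_{b'}$ must be a gluing edge of the colimit, hence an edge of $J$. Combining this with the identification of leaves and root between $J$ and $T$ then yields the claimed partition.

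For (ii), the direction $(\Rightarrow)$ is immediate from (iii) and Definition \ref{NECKREP_DEF}: any outer face $U$ in $\Omega[\mathfrak{n}]$ must factor through some bead $T_b$ (the stick edges $\eta_e$ contribute only trivial outer faces), so $\boldsymbol{E}^{\mathsf{i}}(U) \subseteq \boldsymbol{E}^{\mathsf{i}}(T_b)$ is disjoint from $\boldsymbol{E}(J)$ and hence from $\boldsymbol{E}^{\mathsf{i}}(J)$. For $(\Leftarrow)$, assuming $\boldsymbol{E}^{\mathsf{i}}(U) \cap \boldsymbol{E}^{\mathsf{i}}(J) = \emptyset$, I would argue that $\boldsymbol{V}(U)$ is contained in a single $\boldsymbol{V}(T_b)$ by a path argument: $U$ is a connected subtree of $T$, so if two of its vertices were to lie in distinct beads $T_{b_1}, T_{b_2}$, the unique path in $T$ between them (which lies in $U$) would contain some edge $e$ transitioning out of $T_{b_1}$. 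Such an $e$ must be a leaf or root of $T_{b_1}$, hence an edge of $J$; since both endpoints of $e$ are interior to the path in $T$, it is an inner edge of $T$, and thus an inner edge of $J$ by (iii). But both endpoints of $e$ lie in $\boldsymbol{V}(U)$, so $e$ is also an inner edge of $U$, contradicting the hypothesis. Hence $U \hookrightarrow T_b$ for some bead.

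Part (i) then follows from (ii) together with the characterization of $\overline{U}$ as the smallest outer face of $T$ containing $U$: if $U$ factors through some bead $T_b$, so does the outer face $\overline{U}$, while the converse $U \hookrightarrow \overline{U}$ is automatic. The main obstacle is the path argument in (ii) $(\Leftarrow)$, which requires carefully matching the subtree structure of an outer face against the bead decomposition; the combinatorial groundwork laid by (iii) is what makes that coupling manageable, and the whole proof rests on a clean analysis of the $\mathsf{Face}_{sc}(J)$-indexed colimit from Remark \ref{GRAFT REM}.
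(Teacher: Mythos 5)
Your proposal is correct, but it takes a genuinely different route from the paper. The paper proves (i) in one line from the observation that $\Omega[\mathfrak{n}]$ is a union of representables of outer faces (so $U$ factors through one of them iff $\overline{U}$ does), and then proves (ii) and (iii) simultaneously by induction on $|\boldsymbol{E}^{\mathsf{i}}(J)|$: pick an inner edge $e \in \boldsymbol{E}^{\mathsf{i}}(J)$, cut $J$ and $T$ along $e$ into grafting decompositions $J = J' \amalg_e J''$, $T = T' \amalg_e T''$, and reduce to the sub-necklaces $\mathfrak{n}'$, $\mathfrak{n}''$. Your argument instead establishes (iii) directly by analyzing the $\mathsf{Face}_{sc}(J)$-indexed colimit of Remark \ref{GRAFT REM}, derives (ii)$(\Rightarrow)$ from (iii), proves (ii)$(\Leftarrow)$ via a path/connectedness argument in $T$, and deduces (i) from (ii). The paper's induction is more uniform and avoids any path combinatorics; your version is more hands-on and exposes the tree geometry, at the cost of needing to carefully verify that the ``transition'' edge $e$ in your path argument is in fact inner in $J$ (this follows from $\mathfrak{n}$ being tall, rather than from (iii) as you attribute it) and that distinct beads' inner-edge sets are disjoint. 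Your derivation of (i) from (ii) is also more circuitous than the paper's direct one-liner, though both are valid. On balance both approaches are sound; the paper's is shorter once the grafting formalism is in hand, while yours requires less machinery but more case-checking.
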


\begin{proof}
	(i) follows since $\Omega[\mathfrak{n}]$ is an union of outer faces.
	
	The arguments for (ii),(iii) are by induction on the number of inner edges $\boldsymbol{E}^{\mathsf{i}}(J)$,
	with the base case $\boldsymbol{E}^{\mathsf{i}}(J) = \emptyset$,
	so that $J=T=\eta$,
	being obvious.
	Otherwise, letting $e \in \boldsymbol{E}^{\mathsf{i}}(J)$, since $e$ is an inner edge of
	both $J$ and $T$
	one has grafting decompositions
	$J = J' \amalg_e J''$,
	$T = T' \amalg_e R''$
	together with inner face maps
	$\mathfrak{n}' \colon J' \to T'$,
	$\mathfrak{n}'' \colon J'' \to T''$.
	One then has that 
	$U$ is in $\Omega[\mathfrak{n}]$
	iff it is in either
	$\Omega[\mathfrak{n}']$ or in $\Omega[\mathfrak{n}'']$,
	yielding the induction step for (ii).
	The induction step for (iii) likewise follows. 
\end{proof}

\begin{remark}
	If $S \xrightarrow{d} S'$
	is a degeneracy,
	the vertices of $S'$ are identified with
	the vertices of $S$ that are not collapsed to edges.
	Thus, by factoring a tall map
	$\varphi \colon S \xrightarrow{t} T$ as
	a degeneracy followed by inner face
	$S \xrightarrow{d} \varphi S \xrightarrow{i} T$,
	cf. Remark \ref{TREEFACTNAMES_REM},
	the decomposition (iii) in Lemma \ref{FACEINNECK LEM}
	generalizes to 
\begin{equation}\label{EDGEBREAK EQ}
	\boldsymbol{E}(T) = 
	\boldsymbol{E}(\varphi S) \amalg 
	\coprod_{v \in \boldsymbol{V}(S)}
	\boldsymbol{E}^{\mathsf{i}}(\overline{\varphi S_v}).
\end{equation}
\end{remark}

\begin{notation}\label{JFNOT NOT}
	Given a necklace
	$\mathfrak{n} \colon J \to T$
	and outer face $F \to T$
	we write 
	$\mathfrak{n}_F \colon J_F \to F$
	for the necklace characterized by
\[
	\boldsymbol{E}^{\mathsf{i}}(J_F)
	=
	\boldsymbol{E}^{\mathsf{i}}(J)
	\cap
	\boldsymbol{E}^{\mathsf{i}}(F).
\]
\end{notation}

\begin{example}
	Letting $\mathfrak n \colon J \to T$ be the necklace in 
	Figure \ref{FIGURE},
	and for $F \to T$ the outer face depicted 
	in the middle below,
	the following represents 
	$\mathfrak{n}_F \colon J_F \to F$,
	with the $F_i$ its beads.
\begin{equation}\label{NECKOUTF EQ}
	\begin{tikzpicture}[grow=up,auto,level distance=2.1em,
	every node/.style = {font=\footnotesize,inner sep=2pt},
	dummy/.style={circle,draw,inner sep=0pt,minimum size=1.375mm}]
	\begin{scope}[xshift=-2em]
	\begin{scope}
	\tikzstyle{level 2}=[sibling distance=2.25em]%
	\tikzstyle{level 3}=[sibling distance=1.25em]%
	\node at (-0.5,1) [font = \normalsize] {$F_1$}
	child{node [dummy] {}
		child{
		edge from parent node [near end,swap]{$l_3$}}
		child
		edge from parent node {$a$}};
	\end{scope}
	\node at (1,1) [font = \normalsize] {$F_2$}
	child{node [dummy] {}
		child{node [dummy] {}
			child
		}
		edge from parent node [swap] {$b$}};
	\begin{scope}
	\tikzstyle{level 2}=[sibling distance=2.5em]%
	\node at (2.5,1) [font = \normalsize] {$F_4$}
	child{node [dummy] {}
		child{node[dummy] {}
			child
		}
		child{
			edge from parent node {$c$}}
		edge from parent node [swap] {$d$}};
	\end{scope}
	\begin{scope}
	\tikzstyle{level 2}=[sibling distance=4.25em,level distance=1.8em]%
	\node at (1,-1) [font = \normalsize] {$F_5$}
	child{node [dummy] {}
		child[sibling distance = 4em]{edge from parent node [swap] {$d$} }
		child[sibling distance = 3.5em]{edge from parent node [swap,near end] {$b$} }
		child[sibling distance = 4em]{ edge from parent node {$\phantom{d}a$} }
		edge from parent node [swap] {$r$}};
	\end{scope}
	\end{scope}
	\begin{scope}[yshift=-1.5em,xshift=2.5em]
	\begin{scope}[level distance=2.3em]
	\tikzstyle{level 2}=[sibling distance=4em]%
	\tikzstyle{level 3}=[sibling distance=2.75em]%
	\tikzstyle{level 4}=[sibling distance=1.25em]%
	\tikzstyle{level 5}=[sibling distance=0.875em]%
	\node at (5.375,0) [font = \normalsize] {$F$}
	child{node [dummy] {}
		child[sibling distance = 4em]{node [dummy] {}
			child{node [dummy] {}
				child
			}
			child{
				edge from parent node [near end] {$c$}}
			edge from parent node [swap] {$d$}}
		child[sibling distance =3.5em]{node [dummy] {}
			child{node [dummy] {}
				child
			}
			edge from parent node [swap,near end] {$b$}}
		child[sibling distance =4em]{node [dummy] {}
			child{
			edge from parent node [near end,swap]{$l_3$}}
			child
			edge from parent node {$\phantom{d}a$}}
		edge from parent node [swap] {$r$}};
	\end{scope}
	\begin{scope}[level distance=2.3em]
	\tikzstyle{level 2}=[sibling distance=2.6em]%
	\tikzstyle{level 3}=[sibling distance=1.5em]%
	\node at (10.5,0.3) [font = \normalsize] {$J_F$}
	child{node [dummy] {}
		child{node [dummy] {}
			child
			child{
				edge from parent node [near end] {$c$}}	
			edge from parent node [swap] {$d$}}
		child{node [dummy] {}
			child
			edge from parent node [near end,swap] {$b$}}
		child{node [dummy] {}
			child{
			edge from parent node [near end,swap]{$l_3$}}
			child
			edge from parent node {$\phantom{d}a$}}
		edge from parent node [swap] {$r$}};
	\end{scope}
	\draw [->] (9.1,1) -- node {$\mathfrak{n}_F$} (7.6,1);
	\end{scope}
	\end{tikzpicture}
\end{equation}
In general, the $\mathfrak{n}_F$ construction works as follows, where say a bead $T_b$ is \emph{outer}
if it is connected to at most one other bead
$T_{b'}$
(equivalently, if all outer edges of the bead
$T_b$ are outer edges of $T$ itself, \emph{except at most one}).
First, $\mathfrak{n}_F$ removes some outer beads altogether.
In this example, 
$T_3$ from Figure \eqref{FIGURE} is removed.
Then, some of the resulting outer beads are replaced with outer faces of themselves.
In this example, 
$T_1$, $T_2$, $T_4$ from Figure \eqref{FIGURE}
are replaced with $F_1$, $F_2$, $F_4$
(note that $T_4$ was initially not an outer bead, 
but became so upon removal of $T_3$).
	
We caution that, just as in this example,
one in general does not have a map $J_F \to J$,
as $\boldsymbol{E}(J)$ needs not contain $\boldsymbol{E}(J_F)$.
Instead, as will follow from
Proposition \ref{MAPNECK PROP}(ii),
one has a map of necklaces
$\mathfrak{n}_F \to \mathfrak{n}$,
which should be thought of as an outer face map in $\mathsf{Nec}$.
\end{example}

\begin{corollary}\label{NECINT COR}
	Let $\mathfrak{n} \colon J \to T$ be a necklace and
	$F \to T$ be an outer face.
	Then
\[
	\Omega[\mathfrak{n}_F] = \Omega[\mathfrak{n}] \cap \Omega[F]
\]
	where the intersection is taken 
	as subpresheaves of $\Omega[T]$.
\end{corollary}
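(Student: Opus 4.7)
My plan is to check the equality of these subpresheaves of $\Omega[T]$ by testing on dendrices $f \colon U \to T$ and then reducing to outer faces of $T$. Using the standard factorization of Proposition \ref{TREEFACT_PROP}, any dendrex $f$ factors through its image face $fU \hookrightarrow T$, so membership in either side---each of which is a union of representables on faces of $T$---is controlled by $fU$; applying Lemma \ref{FACEINNECK LEM}(i) further reduces this to the outer closure $\overline{fU}$. It thus suffices to compare which outer faces of $T$ lie in each side.

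For an outer face $V \hookrightarrow T$, I would apply Lemma \ref{FACEINNECK LEM}(ii) separately to $\mathfrak{n}$ and to $\mathfrak{n}_F \colon J_F \to F$ to obtain
\begin{align*}
V \in \Omega[\mathfrak{n}] \cap \Omega[F] &\iff \boldsymbol{E}(V) \subseteq \boldsymbol{E}(F) \text{ and } \boldsymbol{E}^{\mathsf{i}}(J) \cap \boldsymbol{E}^{\mathsf{i}}(V) = \emptyset, \\
V \in \Omega[\mathfrak{n}_F] &\iff \boldsymbol{E}(V) \subseteq \boldsymbol{E}(F) \text{ and } \boldsymbol{E}^{\mathsf{i}}(J_F) \cap \boldsymbol{E}^{\mathsf{i}}(V) = \emptyset.
\end{align*}

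To conclude, I would observe that any face $V \hookrightarrow F$ satisfies $\boldsymbol{E}^{\mathsf{i}}(V) \subseteq \boldsymbol{E}^{\mathsf{i}}(F)$: an inner edge of $V$, being neither a leaf nor the root of $V$, cannot be mapped to a leaf or to the root of $F$ under any composite of inner and outer face maps (cf. Remark \ref{TREEFACTNAMES_REM}). Combining this with the defining identity $\boldsymbol{E}^{\mathsf{i}}(J_F) = \boldsymbol{E}^{\mathsf{i}}(J) \cap \boldsymbol{E}^{\mathsf{i}}(F)$ from Notation \ref{JFNOT NOT} gives
\[
\boldsymbol{E}^{\mathsf{i}}(J) \cap \boldsymbol{E}^{\mathsf{i}}(V) = \boldsymbol{E}^{\mathsf{i}}(J) \cap \boldsymbol{E}^{\mathsf{i}}(F) \cap \boldsymbol{E}^{\mathsf{i}}(V) = \boldsymbol{E}^{\mathsf{i}}(J_F) \cap \boldsymbol{E}^{\mathsf{i}}(V),
\]
establishing the equivalence of the two conditions. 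The only mild subtlety, rather than a substantive obstacle, is the initial reduction from arbitrary dendrices to outer faces, which is handled transparently by Lemma \ref{FACEINNECK LEM}(i) together with the uniqueness of the degeneracy-face factorization.
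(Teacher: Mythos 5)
Your proof is correct and follows essentially the same route as the paper: reduce to outer faces via Lemma \ref{FACEINNECK LEM}(i), apply Lemma \ref{FACEINNECK LEM}(ii) to characterize membership on each side, and conclude from $\boldsymbol{E}^{\mathsf{i}}(V) \subseteq \boldsymbol{E}^{\mathsf{i}}(F)$ together with the defining identity for $J_F$. The one point you leave implicit (which the paper flags parenthetically) is that, because $F$ is an outer face of $T$, an outer face $V \hookrightarrow T$ with $\boldsymbol{E}(V) \subseteq \boldsymbol{E}(F)$ is automatically an outer face of $F$, so that Lemma \ref{FACEINNECK LEM}(ii) may indeed be applied to $\mathfrak{n}_F\colon J_F \to F$.
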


\begin{proof}
	Combining (i),(ii) in 
	Lemma \ref{FACEINNECK LEM}
	we see that a face $U \hookrightarrow F$ is
	in $\Omega[\mathfrak{n}]$
	iff $\boldsymbol{E}(J) \cap 
	\boldsymbol{E}^{\mathsf{i}}(\overline{U}) = \emptyset$,
	where (since $F$ is outer) the outer closure $\overline{U}$
	can be taken in either $T$ or $F$.
	But,
	since $\overline{U} \hookrightarrow F$
	implies 
	$\boldsymbol{E}^{\mathsf{i}}(\overline{U})
	\subseteq 
	\boldsymbol{E}^{\mathsf{i}}(F)$,
	this is equivalent to 
	$\boldsymbol{E}(J_F) \cap 
	\boldsymbol{E}^{\mathsf{i}}(\overline{U}) = \emptyset$,
	i.e. to $U$ being in $\Omega[\mathfrak{n}_F]$.
\end{proof}

We next characterize the maps in $\mathsf{Nec}$.
See Notations \ref{TREEFACT NOT}, \ref{JFNOT NOT}
for the meaning of
$\varphi J$,
$J'_{\overline{\varphi J}}$.

\begin{proposition}\label{MAPNECK PROP}
	Let $\mathfrak{n}\colon J \to T$ and $\mathfrak{n}' \colon J' \to T'$ be necklaces. Then:
\begin{enumerate}
\item[(i)]
	A map $\mathfrak{n} \to \mathfrak{n}'$ in $\mathsf{Nec}$
	is uniquely determined by some map 
	$T \to T'$ in $\Omega$. 
	More precisely, there exists an unique dashed arrow
	making the following commute.
\begin{equation}\label{NECKMAP EQ}
\begin{tikzcd}
	\Omega[\mathfrak{n}] 
	\ar[hookrightarrow]{r} 
	\ar{d}
&
	\Omega[T] 
	\ar[dashed]{d}{\exists !}
\\
	\Omega[\mathfrak{n}']
	\ar[hookrightarrow]{r}
&
	\Omega[T']
	\end{tikzcd}
\end{equation}
\item[(ii)]
	A map of trees 
	$\varphi \colon T \to T'$ in $\Omega$
	induces a map 
	$\mathfrak{n} \to \mathfrak{n}'$ in $\mathsf{Nec}$
	iff
	$\varphi J \supseteq J'_{\overline{\varphi T}}$.
\end{enumerate}
\end{proposition}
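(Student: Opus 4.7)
For part (i), the plan is to exploit the strict Segal condition. Since $\Omega[T']$ is representable, it satisfies the Segal condition (Example \ref{OMTSEG EX}), and by Remark \ref{SEGFORNECK REM} the inclusion $\Omega[\mathfrak{n}] \hookrightarrow \Omega[T]$ induces a bijection $\dSet(\Omega[T], \Omega[T']) \xrightarrow{\simeq} \dSet(\Omega[\mathfrak{n}], \Omega[T'])$. Given a map $\mathfrak{n} \to \mathfrak{n}'$ in $\mathsf{Nec}$, post-composition with $\Omega[\mathfrak{n}'] \hookrightarrow \Omega[T']$ and the inverse of this bijection produce the unique dashed lift in \eqref{NECKMAP EQ}, which by the Yoneda lemma corresponds to the required unique map $T \to T'$.

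For part (ii), I first unpack the inducing condition. Since $\Omega[\mathfrak{n}] = \bigcup_b \Omega[T_b]$ over beads $T_b$ (Definition \ref{NECKREP_DEF}), $\varphi$ induces $\mathfrak{n} \to \mathfrak{n}'$ iff each composite $\Omega[T_b] \hookrightarrow \Omega[T] \xrightarrow{\varphi} \Omega[T']$ lands in $\Omega[\mathfrak{n}']$. Factoring $\varphi$ through its outer face closure as $T \to \overline{\varphi T} \xrightarrow{o} T'$ and applying Corollary \ref{NECINT COR}, this becomes the requirement that each $\overline{\varphi T_b}$ lies in $\Omega[\mathfrak{n}'_{\overline{\varphi T}}]$, which by Lemma \ref{FACEINNECK LEM}(i),(ii) reduces to the edge-level condition
\[
\boldsymbol{E}^{\mathsf{i}}(\overline{\varphi T_b}) \cap \boldsymbol{E}^{\mathsf{i}}(J'_{\overline{\varphi T}}) = \emptyset \quad \text{for every bead } T_b.
\]

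The key algebraic step is then the identity $\boldsymbol{E}^{\mathsf{i}}(\overline{\varphi T}) \setminus \bigcup_{b} \boldsymbol{E}^{\mathsf{i}}(\overline{\varphi T_b}) = \boldsymbol{E}^{\mathsf{i}}(\varphi J)$. Granted this, the bead condition above is equivalent to $\boldsymbol{E}^{\mathsf{i}}(J'_{\overline{\varphi T}}) \subseteq \boldsymbol{E}^{\mathsf{i}}(\varphi J)$, and since both $\varphi J$ and $J'_{\overline{\varphi T}}$ are inner faces of $\overline{\varphi T}$ with matching leaves and root, this inclusion is precisely $\varphi J \supseteq J'_{\overline{\varphi T}}$. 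Proving the identity is the main obstacle. The inclusion $\boldsymbol{E}^{\mathsf{i}}(\varphi J) \subseteq \boldsymbol{E}^{\mathsf{i}}(\overline{\varphi T})$ is immediate since $\varphi J \to \overline{\varphi T}$ is an inner face. To show every inner edge of $\overline{\varphi T}$ falls in either $\boldsymbol{E}^{\mathsf{i}}(\varphi J)$ or some $\boldsymbol{E}^{\mathsf{i}}(\overline{\varphi T_b})$, one combines the edge decomposition of the necklace $\mathfrak{n}$ (Lemma \ref{FACEINNECK LEM}(iii)) with the generalized tall-map decomposition \eqref{EDGEBREAK EQ} applied both to $T \to \overline{\varphi T}$ and to each $T_b \to \overline{\varphi T_b}$, together with the partition $\boldsymbol{V}(T) = \coprod_b \boldsymbol{V}(T_b)$ of vertices by bead; disjointness of the two contributions follows because images of inner edges of $J$ are boundary edges of beads in $T$, hence become leaves or roots (not inner edges) of each $\overline{\varphi T_b}$ by tallness of $T_b \to \overline{\varphi T_b}$.
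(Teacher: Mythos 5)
Your argument for part (i) is essentially the same as the paper's: the paper pulls back the compatible maps $\overline{\mathfrak n U} \to T'$ through Remark \ref{GRAFT REM} (the colimit decomposition of $T$ in $\Omega$), while you invoke Remark \ref{SEGFORNECK REM} applied to $X = \Omega[T']$; both are packagings of the same grafting observation, and your version is fine.

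For part (ii) your reduction to the bead-level, edge-theoretic condition is also the same as the paper's, and your ``key identity'' is indeed the correct thing to prove (it is exactly the restriction of the paper's \eqref{DECOMPPR EQ} to inner edges). Where your route diverges is the derivation of that identity, and there is a genuine gap in the disjointness half. You need $\boldsymbol{E}^{\mathsf{i}}(\varphi J) \cap \boldsymbol{E}^{\mathsf{i}}(\overline{\varphi T_b}) = \emptyset$ for every bead $b$, and an element of $\boldsymbol{E}^{\mathsf{i}}(\varphi J)$ is $\varphi(f)$ for some $f \in \boldsymbol{E}(J)$. Your argument (``images of inner edges of $J$ are boundary edges of beads in $T$, hence become leaves or roots of $\overline{\varphi T_b}$ by tallness'') only handles the case $f \in \boldsymbol{E}(T_b)$, i.e. $f$ an outer edge of the bead $T_b$ itself. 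It does not address the case $f \notin \boldsymbol{E}(T_b)$, where $\varphi$ could a priori identify $f$ with an inner edge of $T_b$ (when $\varphi$ has a nontrivial degeneracy factor), or the outer closure $\overline{\varphi T_b}$ could a priori pick up $\varphi(f)$ as an inner edge even though $\varphi(f) \notin \varphi(\boldsymbol E(T_b))$. Ruling these out requires an additional argument about how degeneracies interact with the bead decomposition, which you did not supply. More generally, your strategy of patching together Lemma \ref{FACEINNECK LEM}(iii) with applications of \eqref{EDGEBREAK EQ} to $T \to \overline{\varphi T}$ and to each $T_b \to \overline{\varphi T_b}$ separately is harder to make airtight than the paper's route, which simply applies \eqref{EDGEBREAK EQ} once to the single tall composite $\tilde\varphi \colon J \xrightarrow{\mathfrak n} T \to \overline{\varphi T}$; this yields the disjoint-union decomposition $\boldsymbol{E}(\overline{\varphi T}) = \boldsymbol{E}(\varphi J) \amalg \coprod_b \boldsymbol{E}^{\mathsf{i}}(\overline{\varphi T_b})$ in one step, with disjointness built in, and no case analysis on degeneracies is needed. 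I recommend replacing your derivation of the identity with this direct application.
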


\begin{proof}
	For $U \in \mathsf{Face}_{sc}(J)$
	the composite
	$\Omega[\mathfrak{n}] \to 
	\Omega[\mathfrak{n}'] \to 
	\Omega[T']$
	in \eqref{NECKMAP EQ}
	gives compatible maps
	$\Omega[\overline{\mathfrak{n} U}] \to \Omega[T']$
	in $\mathsf{dSet}$,
	and thus compatible maps
	$\overline{\mathfrak{n} U} \to T'$ 
	in $\Omega$,
	so (i) follows from 
	Remark \ref{GRAFT REM}.

	We now turn to (ii).
	The map $\varphi$ defines a map of necklaces 
	precisely if it induces maps
	$\Omega[T_b] \to \Omega[\mathfrak{n}']$
	for each bead 
	$T_b$, $b \in \boldsymbol{B}(\mathfrak n)$
	and, by Lemma \ref{FACEINNECK LEM},
	this is equivalent to
	\begin{equation}\label{MAPDES EQ}
	\emptyset
	=
	\boldsymbol{E}^{\mathsf{i}}(J')
	\cap
	\boldsymbol{E}^{\mathsf{i}}(\overline{ \varphi T_b})
	=
	\boldsymbol{E}^{\mathsf{i}}(J')
	\cap
	\boldsymbol{E}^{\mathsf{i}}(\overline{ \varphi \mathfrak{n} J_b}).
	\end{equation}
	
	Writing $\tilde{\varphi}$ for the composite
	$
	J \xrightarrow{\mathfrak n} T 
	\xrightarrow{\varphi} \overline{\varphi T}
	$
	and noting that $\tilde{\varphi}$ is tall,
	\eqref{EDGEBREAK EQ} becomes 
	\begin{equation}\label{DECOMPPR EQ}
	\boldsymbol{E}(\overline{\varphi T})
	=
	\boldsymbol{E}(\tilde{\varphi} J)
	\amalg
	\coprod_{b \in \boldsymbol{V}(J)}
	\boldsymbol{E}^{\mathsf{i}}(\overline{\tilde{\varphi} J_b})
	=
	\boldsymbol{E}(\tilde{\varphi} J)
	\amalg
	\coprod_{b \in \boldsymbol{B}(\mathfrak n)}
	\boldsymbol{E}^{\mathsf{i}}(\overline{\varphi T_b}).
	\end{equation}
	Thus, \eqref{MAPDES EQ}
	amounts to
	$\boldsymbol{E}^{\mathsf{i}}(J') \cap 
	\boldsymbol{E}(\overline{\varphi T})
	\subseteq
	\boldsymbol{E}(\varphi J)$,
	which is equivalent to the desired
	$\varphi J \supseteq J'_{\overline{\varphi T}}$
	(as these trees have the same outer edges).
\end{proof}

\begin{remark}\label{NECKMAPCHAR REM}
	Let $\mathfrak{n},\mathfrak{n}',T,T'$ be as in
	Proposition \ref{MAPNECK PROP}
	and suppose 
	$\varphi \colon T \to T'$
	defines a map
	$\mathfrak{n} \to \mathfrak{n}'$.
	Then for every outer face $F \to T$
	it follows from 
	Corollary \ref{NECINT COR}
	that the restriction 
	$F \to \overline{\varphi F}$
	likewise induces a restriction
	$\mathfrak{n}_F \to \mathfrak{n}'_{\overline{\varphi F}}$,
	from which it follows that
	$\varphi J_F \supseteq 
	\left(J'_{\overline{\varphi T}}\right)_{\overline{\varphi F}}
	=
	J'_{\overline{\varphi F}}$.
\end{remark}

\begin{remark}\label{BEADMAP REM}
	Let $\mathfrak{n},\mathfrak{n}',T,T',\varphi$ be as in the previous remark and suppose in addition that $\varphi$ is a face map.
	Then, since different beads share at most one edge,
	for each bead $T_{b} \hookrightarrow T$
	of $\mathfrak{n}$,
	there is a unique bead
	$T'_{\varphi_{\**} b} \hookrightarrow T'$
	of $\mathfrak{n}'$
	such that
	$T_b \hookrightarrow T \to T'$
	factors as
	$T_b \to T'_{\varphi_{\**}b} \hookrightarrow T'$.
	In particular, 
	this defines a map of bead sets
	$\varphi_{\**} \colon 
	\boldsymbol{B}(\mathfrak{n}) \to 
	\boldsymbol{B}(\mathfrak{n}')$.
\end{remark}

\section{The dendroidal $W_!$-construction}\label{WCONS SEC}

This section will establish the description
of the $W_!$-construction in \eqref{SOPDSET_EQ}
given in Theorem \ref{THMB}.

Throughout we make  
use of the factorizations in $\Omega$ given in 
Proposition \ref{TREEFACT_PROP},
and follow Notation \ref{MAPLABELS_NOT}
by labeling a map by the letters d/i/o/t/f/p
to indicate that the map is
a degeneracy/inner face/outer face/tall/face/planar.
Moreover, we implicitly use
Remark \ref{TREEFACTNAMES_REM},
stating that for some types of maps the factorization 
\eqref{TREEFACT_EQ} has only certain factors,
as well as Remark \ref{IGNPL REM},
which combines factors in \eqref{TREEFACT_EQ}
to obtain simplified factorizations.
 
We first build $W(T)$ for a tree $T$, cf. Proposition \ref{PROPA PROP}.

\begin{definition}\label{NWTNS DEF}
	Let $T \in \Omega$ be a tree.
	We define 
	$W(T) \in \mathsf{sOp}$
	to be the simplicial operad whose nerve is the 
	simplicial dendroidal set 
	$NW(T) \in \mathsf{sdSet}$
	(cf. Remark \ref{SIMPOPREM})
	with $n$-simplices given by
\begin{equation}\label{NWTDEF EQ}
	NW(T)_n(S)
=
	\left\{
	\text{composable strings }
	S \xrightarrow{t} 
	J_0 \xrightarrow{i,p} 
	J_1 \xrightarrow{i,p} 
	\cdots \xrightarrow{i,p}
	J_n \xrightarrow{i,p}
	F \xrightarrow{o,p}
	T
	\text{ of arrows in $\Omega$}
	\right\}.
\end{equation}
	Equivalently, 
	it suffices to require that the
	$S \to J_i$ are tall maps
	and the $J_i \to T$ are planar face maps.
	We note that $F$ is superfluous, 
	being determined by $J_n \xrightarrow{f,p} T$,
	but including it will make \eqref{NWNWORKS DEF} 
	below more readable.
	See also Remark \ref{J0JFREPACK REM}.

	Functoriality of 
	$NW(T)$
	with respect to a map $S^{\**} \to S$
	is described by the diagram
\begin{equation}\label{NWFUNC EQ}
\begin{tikzcd}
	S^{\**} \ar{r}{t} \ar{d}
&
	J^{\**}_0 \ar{r}{i,p} \ar{d}{o}
&
	J^{\**}_1 \ar{r}{i,p} \ar{d}{o}
&
	\cdots \ar{r}{i,p}
&
	J^{\**}_n \ar{r}{i,p} \ar{d}{o}
&
	F^{\**} \ar{r}{o,p} \ar{d}{o}
&
	T \ar[equal]{d}
\\
	S \ar{r}{t} 
&
	J_0 \ar{r}{i,p}
&
	J_1 \ar{r}{i,p}
&
	\cdots \ar{r}{i,p}
&
	J_n \ar{r}{i,p}
&
	F \ar{r}{o,p}
&
	T	
\end{tikzcd}
\end{equation}
	where the maps $J^{\**}_k \to J_k$ and 
	$F^{\**} \to F$
	are inductively defined by taking
	$S^{\**} \to J^{\**}_0 \to J_0$
	(resp. $J^{\**}_k \to J^{\**}_{k+1} \to J_{k+1}$,
	$J^{\**}_n \to F^{\**} \to F$)
	to be the ``tall followed by outer face''
	factorization of the composite
	$S^{\**} \to S \to J_0$
	(resp. $J^{\**}_k \to J_{k} \to J_{k+1}$,
	$J^{\**}_n \to J_n \to F$).

	More generally, 
	given a necklace $\mathfrak{n}\colon J \to T$,
	we define
	$NW(\mathfrak{n}) \subseteq NW(T)$
	as the subpresheaf formed by those strings in 
	\eqref{NWTDEF EQ} such that
	one has $J_0 \supseteq J_{F}$
	(where $J_{F}$ is as in Notation \ref{JFNOT NOT}).
	The fact that this $NW(\mathfrak{n})$ is a presheaf follows 
	since, for $S^{\**} \to S$, 
	$J_1,J_i^{\**}$, $F,F^{\**}$ as in \eqref{NWFUNC EQ},
	it is
\begin{equation}\label{NWNWORKS DEF}
	\boldsymbol{E}^{\mathsf{i}}(J^{\**}_0)
=
	\boldsymbol{E}^{\mathsf{i}}(J_0)
	\cap
	\boldsymbol{E}^{\mathsf{i}}(F^{\**})
\supseteq
	\boldsymbol{E}^{\mathsf{i}}(J_{F})
	\cap
	\boldsymbol{E}^{\mathsf{i}}(F^{\**})
=
	\boldsymbol{E}^{\mathsf{i}}(J_{F^{\**}})
\end{equation}
where the first step is 
\cite[Lemma 2.5]{BP_edss}
applied to 
$J^{\**}_0 \xrightarrow{o} J_0 \xrightarrow{i} F$
and 
$J^{\**}_0 \xrightarrow{i} 
F^{\**} \xrightarrow{o}
F$,
the second is the definition of $NW(\mathfrak{n})$,
and the third follows from Notation \ref{JFNOT NOT}
and the fact that $F^{\**} \subseteq F$.
\end{definition}

\begin{remark}\label{J0JFREPACK REM}
	Writing $\phi \colon S \to T$ for the full composite in \eqref{NWTDEF EQ},
	one has that the $F$ therein is $\overline{\phi S}$
	(cf. Notation \ref{TREEFACT NOT})
	In particular, the condition $J_0 \supseteq J_F$
	defining $NW(\mathfrak{n})$
	becomes $J_0 \supseteq J_{\overline{\phi S}}$.
\end{remark}

\begin{remark}\label{NWTNS_REM}
	The $NW(\mathfrak{n})$ given by Definition \ref{NWTNS DEF}
	are nerves of simplicial operads,
	cf. Remark \ref{SIMPOPREM}.
	Indeed, to verify the Segal condition \eqref{SCV2 EQ} note that,
	as the maps $S \to J_i$ in \eqref{NWFUNC EQ}
	are tall, they are uniquely determined by maps
	$S_v \to J_{i,v}$ for $v \in \boldsymbol{V}(S)$,
	cf. \cite[Cor 3.75]{BP_geo} 
	(see also \ref{GRAFT REM}).
	Moreover,  $NW(\mathfrak{n})(\eta)$ is a discrete simplicial set
	since for $S=\eta$ it must be $J_i = \eta$ in \eqref{NWTDEF EQ},
	due to only $\eta$ receiving tall maps from $\eta$.
\end{remark}

Next, we discuss the functoriality of
$NW(T)$ with respect to $T \in \Omega$.
For a map $T \to T'$ in $\Omega$ we define
$NW(T)_n(S)
	\to 
NW(T')_n(S)$
via the diagram (where we drop the superfluous $F$ in \eqref{NWTDEF EQ})
\begin{equation}\label{NWTTSTAR EQ}
\begin{tikzcd}
	S \ar{r}{t} \ar[equal]{d}
&
	J_0 \ar{r}{i,p} \ar{d}{d}
&
	J_1 \ar{r}{i,p} \ar{d}{d}
&
	\cdots \ar{r}{i,p}
&
J_n \ar{r}{f,p} \ar{d}{d}
&
	T \ar{d}
\\
	S \ar{r}{t} 
&
	J'_0 \ar{r}{i,p}
&
	J'_1 \ar{r}{i,p}
&
	\cdots \ar{r}{i,p}
&
	J'_n \ar{r}{f,p}
&
	T'
\end{tikzcd}
\end{equation}
where the maps $J_k \to J'_k$
are (backwards) inductively defined
by taking 
$J_n \to J'_n \to T'$
(resp. 
$J_{k-1} \to J'_{k-1} \to J'_k$)
to be the "degeneracy followed by face"
factorization of the composite
$J_n \to T \to T'$
(resp.
$J_{k-1} \to J_{k} \to J'_k$).

\begin{proposition}\label{NWTNS_NAT_PROP}
	For any map $T \to T'$ in $\Omega$, the induced map
	$NW(T)(S)
	\to 
	NW(T')(S)$
	in 
	\eqref{NWTTSTAR EQ}
	is functorial on $S$.
\end{proposition}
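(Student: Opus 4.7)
We need to show the naturality square
\[
\begin{tikzcd}
NW(T)(S) \ar{r}{(T\to T')_*} \ar{d}{(S^*\to S)^*} & NW(T')(S) \ar{d}{(S^*\to S)^*} \\
NW(T)(S^*) \ar{r}{(T\to T')_*} & NW(T')(S^*)
\end{tikzcd}
\]
commutes for every $S^*\to S$ and $T\to T'$ in $\Omega$. The plan is to compute both composites explicitly on an arbitrary string and reduce their coincidence to a single interchange property.

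Fix a string $\sigma = (S \xrightarrow{t} J_0 \xrightarrow{i,p} \cdots \xrightarrow{i,p} J_n \xrightarrow{f,p} T)$, and write $J''_k$ (resp.\ $(J^*_k)'$) for the $k$-th intermediate tree in the image of $\sigma$ when $(T\to T')_*$ is applied first and $(S^*\to S)^*$ second (resp.\ in the opposite order). By the inductive descriptions in \eqref{NWFUNC EQ} and \eqref{NWTTSTAR EQ}, the naturality square commutes iff $J''_k = (J^*_k)'$ at each level $k$ (as subtrees of a common ambient tree), together with coincidence of the corresponding connecting arrows.

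The technical heart of the argument will be the following \emph{interchange lemma}: for any $\rho\colon A \to B$ in $\Omega$ and any planar degeneracy $\psi\colon B \to C$,
\[
\psi\bigl(\overline{\rho A}\bigr) \;=\; \overline{(\psi \rho) A}
\]
as planar outer subtrees of $C$, where $\overline{(-)}$ denotes the outer closure of Notation \ref{TREEFACT NOT}. Intuitively, a planar degeneracy collapses only unary vertices, thereby preserving the ``betweenness'' structure on edges that governs outer closures; it consequently sends outer faces of $B$ to outer faces of $C$ in a manner compatible with taking images of faces and with the minimality characterization of outer closures.

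Granting this lemma, one verifies $J''_k = (J^*_k)'$ inductively. For $k=0$, one reads off from the constructions that $J''_0 = \overline{(\psi\rho)(S^*)}_{J'_0}$, where $\rho\colon S^* \to J_0$ and $\psi\colon J_0 \to J'_0$ is the planar degeneracy extracted from the factorization of $J_0 \to T \to T'$, while $(J^*_0)' = \psi\bigl(\overline{\rho(S^*)}_{J_0}\bigr)$; these agree by the lemma. The inductive step at level $k$ reapplies the lemma with $S^*$ replaced by $J''_{k-1} = (J^*_{k-1})'$ and with the relevant degeneracy extracted from $J_k \to J'_k$. Coincidence of the connecting arrows is then forced by the uniqueness of the standard factorization (Proposition \ref{TREEFACT_PROP}). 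The hard part will be the interchange lemma: though geometrically transparent, a rigorous proof will require careful bookkeeping of how a planar degeneracy acts on outer faces and their minimality, and of planarity throughout.
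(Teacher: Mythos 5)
Your plan correctly isolates the heart of the matter: one must show that the two orders of applying $(S^*\to S)^*$ and $(T\to T')_*$ produce the same intermediate trees at each level, after which the connecting arrows match by uniqueness of the standard factorization. Your interchange lemma is also correct as stated. However, the ``careful bookkeeping'' you defer is not where the paper spends its effort, and there is a subtler issue with the induction that you do not address.

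On the interchange lemma: the slick proof is not combinatorial bookkeeping but an appeal to closure of \emph{convex} maps under composition (Remark \ref{TREEFACTNAMES_REM}). The composite $J^*_i \to J_i \to J'_i$ is an outer face followed by a degeneracy; both are convex, so the composite is convex, meaning its $pi$-factor in \eqref{TREEFACT_EQ} is trivial. This is precisely the statement that the degeneracy-face and tall-outer factorizations of that composite coincide, which is what one needs. Your form of the lemma, $\psi(\overline{\rho A}) = \overline{(\psi\rho)A}$, is a mild strengthening whose full proof needs one extra observation (preimages of outer faces under degeneracies are again outer) for the minimality half, but the substantive content is exactly the convexity statement, and that is what you should reach for rather than attempting a direct bookkeeping argument. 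On the induction: as you set it up, the $J^*_k$'s in \eqref{NWFUNC EQ} are produced by a \emph{forward} recursion from $S^*$, while the $J'_k$'s in \eqref{NWTTSTAR EQ} are produced by a \emph{backward} recursion from $T'$, so ``the inductive step at level $k$'' cannot simultaneously ride both recursions in the direction your sketch assumes. The paper sidesteps this by first observing that both $(J^*_i)'$ and $(J'_i)^*$ can be read off \emph{directly} from the single composite $J^*_i \to J_i \to J'_i$ — the former as its degeneracy-face middle, the latter as its tall-outer middle — decoupling the verification from the recursion order entirely; only then does the convexity argument close the gap. You use this direct description implicitly when you write $(J^*_0)' = \psi(\overline{\rho(S^*)})$, but you should make it explicit and verify it, since otherwise the induction as stated does not run.
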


\begin{proof}
	First, note that the composite
	$NW(T)({S})
	\to 
	NW(T)({S^{\**}})
	\to 
	NW(T')({S^{\**}})$
	is computed by the left diagram below,
	where
	$S^{\**} \to J^{\**}_i \to J_i$
	and 
	$J^{\**}_i \to \left(J_i^{\**}\right)' \to T'$
	are the unique factorizations with the indicated properties.
	On the other hand, the composite
	$NW(T)(S)
	\to 
	NW(T')(S)
	\to 
	NW(T')(S^{\**})$
	is computed as on the right
	with 
	$J_i \to J'_i \to T'$ and
	$S^{\**} \to \left(J_i'\right)^{\**} \to J'_i$
	the unique indicated factorizations.
\begin{equation}\label{NWTNS_NAT_EQ1}
\begin{tikzcd}
	S \ar{r}{t} 
&
	J_i \ar{r}{f,p} 
&
	T \ar[equal]{d}
&&
	S \ar{r}{t} \ar[equal]{d}
&
	J_i \ar{r}{f,p} \ar{d}{d}
&
	T \ar{d}
\\
	S^{\**} \ar{u} \ar{r}{t} \ar[equal]{d}
&
	J^{\**}_i \ar{r} \arrow{u}[swap]{o,p} \arrow{d}{d}
&
	T \ar{d}
&&
	S \ar{r}
&
	J'_i \arrow{r}{f,p}
&
	T' \ar[equal]{d}
\\
	S^{\**} \ar{r}
&
	\left(J_i^{\**}\right)' \arrow{r}{f,p}
&
	T'
&&
	S^{\**} \arrow{r}{t} \ar{u}
&
	\left(J_i'\right)^{\**} \ar{r} \arrow{u}[swap]{o,p}
&
	T'
\end{tikzcd}
\end{equation}
	The key to the proof is to show
	that the planar faces 
	$\left(J_i^{\**}\right)'$ and
	$\left(J_i'\right)^{\**}$
	of $T'$
	coincide, since it will then be automatic that all maps connecting the
	$\left(J_i^{\**}\right)'$ and
	$\left(J_i'\right)^{\**}$
	and $S^{\**}$, $T'$
	likewise match.

	To see this, we consider the following diagram
	which combines the top halves in \eqref{NWTNS_NAT_EQ1}.
\begin{equation}
\begin{tikzcd}
	S^{\**} \ar{r}{t} \ar{d}
&
	J^{\**}_i \ar{r} \ar{d}{o,p}
&
	T \ar[equal]{d}
\\
	S  \ar{r}{t} \ar[equal]{d}
&
	J_i \ar{r}{f,p}  \ar{d}{d}
&
	T \ar{d}
\\
	S \ar{r}
&
	J_i' \ar{r}{f,p}
&
	T'
\end{tikzcd}
\end{equation}
	Both faces  
	$\left(J^{\**}_i\right)'$ and 
	$\left(J'_i\right)^{\**}
	$
	can be built by factoring
	the composite 
	$J^{\**}_i \to J_i \to J_i'$,
	with 
	$\left(J^{\**}_i\right)'$ 
	coming from the 
	degeneracy-face factorization
	and 
	$\left(J'_i\right)^{\**}$
	coming from the 
	tall-outer factorization.
	But since 
	$J^{\**}_i \to J_i \to J_i'$
	is a composite of convex maps (cf. Definition \ref{TREEMAP_DEF})
    it is again convex (see Remark \ref{TREEFACTNAMES_REM}), 
	so the two factorizations coincide, 
	finishing the proof.
\end{proof}

\begin{corollary}\label{NWTNS_NNAT_COR}
	Let $\mathfrak{n} \colon J \to T$ and
	$\mathfrak{n}' \colon J' \to T'$
	be necklaces and suppose 
	$\psi \colon T\to T'$
	induces a map $\mathfrak{n} \to \mathfrak{n}'$.
	Then the induced map
	$NW(T) \to NW(T')$
	restricts to a map
	$NW(\mathfrak{n}) \to NW(\mathfrak{n}')$.
\end{corollary}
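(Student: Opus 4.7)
The plan is to chase the defining condition $J_0 \supseteq J_{\overline{\phi S}}$ of $NW(\mathfrak{n})$ through the transport in \eqref{NWTTSTAR EQ}, with the necklace compatibility of $\psi$ providing the crucial input via Remark \ref{NECKMAPCHAR REM}. Write $\phi \colon S \to T$ for the top composite and $\phi' = \psi\phi \colon S \to T'$ for the bottom composite, and set $F = \overline{\phi S}$ and $F' = \overline{\phi' S}$. By Remark \ref{J0JFREPACK REM} the implicit outer face of the bottom row in \eqref{NWTTSTAR EQ} is precisely $F'$, and so the composite $J'_0 \to J'_n \to F'$, being a composite of planar inner faces in that diagram, is itself an inner face. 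Consequently $J'_0$ and $F'$ share outer edges, and verifying $J'_0 \supseteq J'_{F'}$ reduces to the single inclusion $\boldsymbol{E}^{\mathsf{i}}(J'_{F'}) \subseteq \boldsymbol{E}(J'_0)$, since any edge of $J'_0$ that is inner in $F'$ is automatically inner in $J'_0$.

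The main computation then combines two monotonicity steps with one application of Remark \ref{NECKMAPCHAR REM}. Since $\phi S \subseteq F$, monotonicity of outer closure in $T'$ gives $F' = \overline{\psi\phi S} \subseteq \overline{\psi F}$, and hence from Notation \ref{JFNOT NOT} one obtains $\boldsymbol{E}^{\mathsf{i}}(J'_{F'}) \subseteq \boldsymbol{E}^{\mathsf{i}}(J'_{\overline{\psi F}})$. Remark \ref{NECKMAPCHAR REM}, applied to the outer face $F$ of $T$, supplies the containment $\psi J_F \supseteq J'_{\overline{\psi F}}$, which on edge sets reads $\boldsymbol{E}(J'_{\overline{\psi F}}) \subseteq \psi(\boldsymbol{E}(J_F))$. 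Finally, the hypothesis $J_0 \supseteq J_F$, read between inner faces of $F$ with matching outer edges, upgrades to $\boldsymbol{E}(J_F) \subseteq \boldsymbol{E}(J_0)$; pushing this along $\psi$ and invoking the identification $\boldsymbol{E}(J'_0) = \psi(\boldsymbol{E}(J_0))$ coming from the degeneracy-face factorization defining $J'_0$ (which, by the compatibility argument at the heart of Proposition \ref{NWTNS_NAT_PROP}, computes $J'_0$ as the degeneracy-face image of $J_0 \to T \to T'$) completes the chain.

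The only real delicacy is the careful bookkeeping of outer-edge structures across the three nested outer faces $F \hookrightarrow T$, $\overline{\psi F} \hookrightarrow T'$, and $F' \hookrightarrow T'$, which is what permits passing back and forth between total-edge and inner-edge formulations at each step of the chain. Once this matching is set up, assembling the four inclusions is purely formal, and the argument is short.
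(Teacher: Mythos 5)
Your proof is correct and follows essentially the same strategy as the paper's: both chase the defining condition $J_0 \supseteq J_{\overline{\phi S}}$ through the pushforward diagram \eqref{NWTTSTAR EQ} using Remark \ref{NECKMAPCHAR REM} as the key input, together with the identification $J'_0 = \psi(J_0)$. The paper compresses this into a single four-step chain of inclusions of trees (and uses the equality $\overline{\psi(\overline{\phi S})} = \overline{\phi' S}$ where you only invoke one inclusion, which suffices), whereas you spell out the edge-set bookkeeping and the reduction to comparing inner edges; the underlying argument is the same.
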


\begin{proof}
	Following Remark \ref{J0JFREPACK REM},
	we need to show that the map
	$NW(T) \to NW(T')$
	sends simplices \eqref{NWTDEF EQ}
	such that
	$J_0 \supseteq 
	J_{\overline{\phi S}}$
	to simplices such that
	$J'_0 \supseteq 
	J'_{\overline{\phi' S}}$,
	where $\phi,\phi'$ are the composites of each simplex.
	This follows since
\[
	J'_0 = 
	\psi (J_0) \supseteq
	\psi (J_{\overline{\phi S}})
	\supseteq
	J'_{\overline{\psi (\overline{\phi S})}}
	=
	J'_{\overline{\psi' S}}
\]	
where the third step is
Remark \ref{NECKMAPCHAR REM}.
\end{proof}

We now introduce a notation that plays an important role 
in two key technical results,
Propositions \ref{NECKCOL PROP} and 
\ref{NWKANEX_PROP}.
Recall that, for any tree 
$U \in \Omega$, 
the poset $\mathsf{Face}_{inn}(U)$
of planar inner faces is in fact a lattice,
with the join $F \vee F'$
the characterized by
$\boldsymbol{E}^{\mathsf{i}}(F \vee F') 
=
\boldsymbol{E}^{\mathsf{i}}(F)
\cup
\boldsymbol{E}^{\mathsf{i}}(F')$.

\begin{notation}\label{STAU NOT}
	Let $\mathfrak{n} \colon J \to T$
	be a necklace,
	$\phi\colon S \to T$ a map in $\Omega$
	and $S \xrightarrow{t} F \xrightarrow{p,o} T$
	its tall-outer factorization.
	We then write
	$J^{\phi} = \phi S \vee J_{F}$,
	where the join is 
	taken in $\mathsf{Face}_{inn}(F)$.
\end{notation}

\begin{remark}
	Following Notation \ref{MAPSOVCOL NOT},
	we write $NW(\mathfrak{n})_{\phi}(S) \subseteq NW(\mathfrak{n})(S) \in \sSet$
	for the subsimplicial set 
	over a coloring $\phi \colon \boldsymbol{E}(S) \to \boldsymbol{E}(T)$.
	Note that,
	by the description in \eqref{NWTDEF EQ},
	$\phi$ must in fact be the map of trees $\phi\colon S \to T$
	given by the composite therein.
\end{remark}

\begin{remark}\label{STAU REM}
	In the context of Notation \ref{STAU NOT},
	and writing 
	$\bar{\phi} \colon S \to F$,
	$\iota \colon F \to T$,
	$\iota^{\phi} \colon J^{\phi} \to T$,
	for the natural maps,
	one has identifications
	\begin{equation}
	\begin{tikzcd}[column sep = 12pt]
	NW(\mathfrak{n})_{\iota^{\phi}}(J^{\phi}) 
	\ar{r}{\simeq}
	&
	NW(\mathfrak{n})_{\phi}(S)
	&
	NW(\mathfrak{n}_F)_{\bar{\phi}}(S)
	\ar{l}[swap]{\simeq}
	&
	NW(J^{\phi} \to F)_{\bar{\phi}}(S)
	\ar{l}[swap]{\simeq}
	\end{tikzcd}
	\end{equation}
	induced by the natural maps
	$S \to J^{\phi}$ between trees and 
	$(J^{\phi} \to F) \to
	\mathfrak{n}_F \to \mathfrak{n}$
	between necklaces.
\end{remark}

\begin{proposition}\label{NECKCOL PROP}
	Let $\mathfrak{n} \colon J \to T$ be a necklace.
	Then one has an identification
	\begin{equation}\label{NECKCOL EQ}
	W(\mathfrak{n})
	\simeq 
	\underset{U \in \mathsf{Face}_{sc}(J)}{\colim}
	W(T_U)
	\end{equation}
	where the colimit takes place 
	in $\mathsf{sOp}$.
\end{proposition}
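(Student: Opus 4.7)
The strategy is to verify the universal property of the colimit via Yoneda's lemma. Because the nerve $N\colon \sOp \hookrightarrow \sdSet$ is fully faithful (Remark \ref{SIMPOPREM}), the claimed identification is equivalent to showing that for every $\mathcal{O} \in \sOp$ the canonical map
\begin{equation*}
\sdSet\bigl(NW(\mathfrak{n}),\, N\mathcal{O}\bigr)
\;\longrightarrow\;
\lim_{U \in \mathsf{Face}_{sc}(J)} \sdSet\bigl(NW(T_U),\, N\mathcal{O}\bigr)
\end{equation*}
is a bijection. The cocone $\bigl(NW(T_U) \to NW(\mathfrak{n})\bigr)_U$ underlying this map is produced as follows: each $T_U$ carries its one-bead necklace $\mathsf{lr}(T_U) \to T_U$ (Remark \ref{SEGISNEC REM}), and the planar outer face $T_U \hookrightarrow T$ defines a map of necklaces $(\mathsf{lr}(T_U) \to T_U) \to \mathfrak{n}$ by Proposition \ref{MAPNECK PROP}(ii). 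The required inclusion $\varphi(\mathsf{lr}(T_U)) \supseteq J_{T_U}$ reduces to $\boldsymbol{E}^{\mathsf{i}}(T_U) \cap \boldsymbol{E}^{\mathsf{i}}(J) = \emptyset$, which is trivial when $U$ is a stick and follows from Lemma \ref{FACEINNECK LEM}(iii) when $U$ is a corolla. Corollary \ref{NWTNS_NNAT_COR} then supplies the desired maps.

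The bijection itself I would prove by a bead-wise decomposition argument. Given $\sigma \in NW(\mathfrak{n})_n(S)$ encoded by the string $S \xrightarrow{t} J_0 \to \cdots \to J_n \to F \to T$ with $J_0 \supseteq J_F$, the inclusion $\boldsymbol{E}^{\mathsf{i}}(J_F) \subseteq \boldsymbol{E}^{\mathsf{i}}(J_0)$ induces a grafting decomposition of each of $F, J_n, \dots, J_0, S$ along the beads $b \in \boldsymbol{V}(J_F)$ of the sub-necklace $\mathfrak{n}_F\colon J_F \to F$. Each bead $F_b$ of $\mathfrak{n}_F$ is an outer face of $T$ satisfying $\boldsymbol{E}^{\mathsf{i}}(F_b) \cap \boldsymbol{E}^{\mathsf{i}}(J) = \emptyset$, so it lies inside a unique bead $T_{b'}$ of $\mathfrak{n}$, and the restricted string $\sigma_b$ yields a simplex in $NW(T_{b'})_n(S_b)$. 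Given a compatible family $(\psi_U)_U$, the value $\psi(\sigma) \in N\mathcal{O}_n(S)$ is defined by applying $\psi_{T_{b'}}$ to each $\sigma_b$ and assembling via the Segal isomorphism of Remark \ref{SEGFORNECK REM}; compatibility along shared edges is guaranteed by the cocone conditions at the sticks $\eta_e$ for $e \in \boldsymbol{E}^{\mathsf{i}}(J_F)$.

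The main obstacle is verifying that $\psi$ is natural in $S$ and is the unique such extension. Uniqueness is essentially forced by the Segal condition on $N\mathcal{O}$: any extension of the $\psi_U$ must send $\sigma$ to the unique element whose bead restrictions are prescribed, by Remark \ref{SEGFORNECK REM}. Naturality in $S$ is the more delicate step, as one must check that the bead-wise restriction $\sigma \mapsto (\sigma_b)_b$ commutes with the ``tall-then-outer'' and ``degeneracy-then-face'' factorizations of Proposition \ref{TREEFACT_PROP} that govern the functoriality of $NW$ in \eqref{NWFUNC EQ} and \eqref{NWTTSTAR EQ}. This rests on the observation that the grafting decomposition of a tree along inner edges is convex, and convex factorizations are preserved under the aforementioned factorizations by Remark \ref{TREEFACTNAMES_REM}.
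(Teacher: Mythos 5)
Your overall architecture matches the paper's: verify the universal property at the level of nerves against test objects satisfying the Segal condition, produce the cocone via outer-face inclusions $T_U \hookrightarrow T$ and Corollary~\ref{NWTNS_NNAT_COR}, decompose bead-wise, and assemble via the Segal isomorphism of Remark~\ref{SEGFORNECK REM}. That is all sound.

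There is, however, a genuine gap in the central decomposition step. You claim that $J_0 \supseteq J_F$, i.e. $\boldsymbol{E}^{\mathsf{i}}(J_F) \subseteq \boldsymbol{E}^{\mathsf{i}}(J_0)$, ``induces a grafting decomposition of each of $F, J_n, \dots, J_0, S$ along the beads $b \in \boldsymbol{V}(J_F)$.'' For $F, J_n,\dots, J_0$ this is correct, since they form a nested chain of inner faces of $F$ each containing $J_F$. But it fails for $S$: the map $S \xrightarrow{t} J_0$ is tall, so its image $\phi S$ is merely an inner face of $J_0$ with no reason to contain $J_F$, and hence $S$ need not split along the inner edges of $J_F$. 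Concretely, take $T$ the $2$-level tree with a binary vertex above a unary vertex, $\mathfrak{n}$ the Segal-core necklace $J = T \xrightarrow{=} T$, and $\sigma$ the unique $0$-simplex over $S = \mathsf{lr}(T)$ with $J_0 = F = T$. Then $J_F = T$ has an inner edge but $\phi S = \mathsf{lr}(T)$ does not, so $S$ admits no grafting decomposition matching the beads of $J_F$, and the $S_b$ you feed to $\psi_{T_{b'}}$ do not exist. This is not a cosmetic issue: without it, a vertex $v \in \boldsymbol{V}(S)$ can have $\overline{\phi S_v}$ spanning several beads of $\mathfrak{n}$, and there is then no single $\psi_U$ to apply.

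The paper avoids this by the normalization step in Notation~\ref{STAU NOT} and Remark~\ref{STAU REM}: one first replaces $S$ with the subtree $J^{\phi} = \phi S \vee J_F \subseteq F$, using the isomorphism $NW(\mathfrak{n})_{\iota^{\phi}}(J^{\phi}) \xrightarrow{\simeq} NW(\mathfrak{n})_{\phi}(S)$ (which holds precisely because all the $J_i$ in the string automatically contain $J^{\phi}$), and only then performs the bead-wise Segal decomposition — which is legitimate for $J^{\phi}$ since $J_F \subseteq J^{\phi}$ by construction. Inserting this intermediate tree $J^{\phi}$ between $S$ and the bead product is exactly the missing ingredient; with it, your zigzag strategy, uniqueness argument, and naturality plan (the latter still requiring the careful book-keeping the paper does in \eqref{LONGMAP EQ}) go through.
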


\begin{proof}
	We will verify \eqref{NECKCOL EQ}
	at the level of nerves.
	More explicitly, 
	we will show that for any $X \in \mathsf{sdSet}$
	with constant objects and 
	satisfying the strict Segal condition
	(cf. Remark \ref{SIMPOPREM}),
	giving a map
	$NW(\mathfrak{n}) \to X$
	is the same as giving compatible maps
	$NW(T_U) \to X$.
	
	Moreover, clearly both sides of 
	\eqref{NECKCOL EQ} yield $\boldsymbol{E}(T)$
	when evaluated at $\eta$.
	As such, we are free to fix a coloring
	$\mathfrak{c} \colon \boldsymbol{E}(T) \to X(\eta)$
	and verify the universal property
	restricted to maps respecting this color assignment.
	And, by using the identification \eqref{DSETCID EQ} and the 
	last comment in Remark \ref{PULLBACKS REM},
	we may evaluate $NW(\mathfrak{n}),X$
	on $\boldsymbol{E}(T)$-colored trees $(S,\phi)$, 
	rather than on uncolored trees.

	Given maps $NW(T_U) \to X$
	we now define the map $NW(\mathfrak{n}) \to X$ via 
	(where $J^{\phi}$, $F$, $\bar{\phi}$, $\iota$, $\iota^{\phi}$
	are as in Remark \ref{STAU REM},
	and 
	$\iota_{\**} \colon 
	\boldsymbol{B}(\mathfrak{n}_{F})
	\to 
	\boldsymbol{B}(\mathfrak{n})$
	is the map of bead sets in Remark \ref{BEADMAP REM})
\begin{equation}\label{LONGMAP EQ}
\begin{tikzcd}[column sep = 20pt, row sep=0]
	NW(\mathfrak{n})_{\phi}(S)
&
	NW(\mathfrak{n})_{\iota^{\phi}}(J^{\phi})
	\ar{l}[swap]{\simeq}{(I)}
	\ar{r}{\simeq}[swap]{(II)}
&
	\underset{b \in \boldsymbol{B}(\mathfrak{n}_{F})}{\prod}
	NW(\mathfrak{n})_{\iota^{\phi}_b}(J^{\phi}_b)
\\
&&&
	\underset{b \in \boldsymbol{B}(\mathfrak{n}_{F})}{\prod}
	NW(T_{\iota_{\**}b})_{\iota^{\phi}_b}(J^{\phi}_b)
	\ar{lu}[swap]{\simeq}{(III)}
	\ar{ld}{(IV)}
\\
	X_{\mathfrak{c}\phi}(S)
&
	X_{\mathfrak{c}\iota^{\phi}}(J^{\phi})
	\ar{r}{\simeq}[swap]{(V)}
	\ar{l}{(VI)}
&
	\underset{b \in \boldsymbol{B}(\mathfrak{n}_{F})}{\prod}
	X_{\mathfrak{c}\iota^{\phi}_b}(J^{\phi}_b)
\end{tikzcd}
\end{equation}
	where the arrows (II) and (V) are isomorphisms by the Segal condition
	while (III) is an isomorphism since 
	$J^{\phi}_b \to T$ factors through 
	$T_{\iota_{\**}b}$.
	
	Moreover, the arrow (IV) in \eqref{LONGMAP EQ}
	is induced by the chosen maps
	$NW(T_U) \to X$,
	so clearly 
	\eqref{LONGMAP EQ}
	denotes the only possible compatible map
	$NW(T) \to X$.

	It only remains to check that
	\eqref{LONGMAP EQ}
	is indeed a map in 
	$\mathsf{sdSet}$, i.e. that it is natural on 
	$(S,\phi)$.
	To see this, one first readily checks that a map
	$\psi \colon (S,\phi) \to (S^{\**},\phi^{\**})$
	induces a compatible inclusion
	$\psi \colon J^{\phi} \hookrightarrow J^{\phi^{\**}}$
	showing the naturality of arrows (I),(VI) 
	in the zigzag.
	Next, by Remark \ref{BEADMAP REM}
	one has a map of bead sets
	$\psi_{\**} \colon 
	\boldsymbol{B}(\mathfrak{n}_{F})
	\to
	\boldsymbol{B}(\mathfrak{n}_{F^{\**}})$
	for which one has further compatible maps
	$J^{\phi}_b
	\to 
	J^{\phi^{\**}}_{\psi_{\**}b}$,
	showing the naturality of the arrows (II),(V).
	Lastly, for any bead 
	$b \in \boldsymbol{B}(\mathfrak{n}_{F})$
	one has
	$T_{\iota_{\**}b} = T_{(\iota^{\**})_{\**} \psi_{\**}b}$,
	showing naturality of the arrows (III),(IV).
\end{proof}

\begin{remark}\label{PREOPCOLEV REM}
	Let $I \xrightarrow{A_{\bullet}} \mathsf{dSet}$
	be a diagram of dendroidal sets and let
	$A = \colim_{i \in I} A_i$.

	We will find it useful to describe $A$
	in light of the identification \eqref{DSETCID EQ}.
	For each $A(\eta)$-colored tree 
	$\vect{S} = (S,\mathfrak{c})$
	we write
	$I_{\vect{S}/}$
	for the category with objects factorizations
	$\boldsymbol{E}(S) \to A_i(\eta) \to A(\eta)$ 
	for some $i \in I$,
	which we represent by 
	$\boldsymbol{E}(S) \to A_i(\eta)$,
	together with maps $i \to i'$ in $I$
	satisfying the obvious compatibility.
	Then
\begin{equation}\label{COLIMALT EQ}
	A_{\mathfrak{c}}(S) \simeq 
	\underset{(\boldsymbol{E}(S) \to A_i(\eta)) \in I_{\vect{S}/}}{\colim}
	A_{i,
	\mathfrak{c}_i}(S)
\end{equation}
	where $\mathfrak{c}_i$ in 
	$A_{i,\mathfrak{c}_i}(S)$
	denotes the coloring given by
	$\boldsymbol{E}(S) \to A_i(\eta)$.
\end{remark}

\begin{proposition}\label{NWKANEX_PROP}
	Let $X \in \mathsf{dSet}$ and define
	$NW(X) \in \mathsf{sdSet}$ by
	\begin{equation}\label{NWKANEX EQ}
	NW(X) =
	\underset{(\Omega[\mathfrak{n}] \to X)
		\in \mathsf{Nec}_{/X}}{\colim}
	NW(\mathfrak{n}).
	\end{equation}
	where
	$\mathsf{Nec}_{/X} = \mathsf{Nec} \downarrow X$ is the over category of maps $\Omega[\mathfrak{n}] \to X$, and
	the colimit is taken in $\mathsf{sdSet}$.

	Then $NW(X)$ satisfies the strict Segal condition
	\eqref{SCV1 EQ},\eqref{SCV2 EQ},
	and has constant objects, cf. Remark \ref{SIMPOPREM}.
	In particular, since $N$ is fully-faithful one has that
	$NW(X)$ is the nerve of the simplicial operad
\[
	W(X) =
	\underset{
	(\Omega[\mathfrak{n}] \to X)
	\in \mathsf{Nec}_{/X}}{\colim}
	W(\mathfrak{n}).
\]
	where the colimit is now taken in simplicial operads
	$\mathsf{sOp}$.
\end{proposition}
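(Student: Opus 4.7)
The plan is to verify the two characterizing properties of objects in the essential image of $N\colon \sOp \to \sdSet$ separately---constant object set and strict Segal---so that the final identification of $W(X)$ follows from full faithfulness of $N$ (Remark \ref{SIMPOPREM}).

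For constant objects, observe that by Remark \ref{NWTNS_REM} each $NW(\mathfrak{n})(\eta)$ is a discrete simplicial set (only $\eta$ admits tall maps from $\eta$, so every string in \eqref{NWTDEF EQ} with $S = \eta$ is forced to have all $J_i = \eta$). Since the inclusion of discrete simplicial sets into $\sSet$ is a left adjoint (to $X \mapsto X_0$), it preserves colimits; hence $NW(X)(\eta)$ is discrete. Moreover, since $\Omega \hookrightarrow \mathsf{Nec}$ (Remark \ref{SEGISNEC REM}) and $\Omega$ is dense in $\dSet$, one readily identifies $NW(X)(\eta) = \colim_{\mathsf{Nec}_{/X}} \Omega[\mathfrak{n}](\eta) \simeq X(\eta)$.

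For the Segal condition, fix a tree $S$ and coloring $\mathfrak{c}\colon \boldsymbol{E}(S) \to X(\eta)$; the goal is to show that the comparison $NW(X)_{\mathfrak{c}}(S) \to \prod_{v \in \boldsymbol{V}(S)} NW(X)_{\mathfrak{c}_v}(S_v)$ is an isomorphism. Applying Remark \ref{PREOPCOLEV REM} to the defining colimit \eqref{NWKANEX EQ}, the left-hand side rewrites as a colimit of $NW(\mathfrak{n})_{\phi}(S)$ over pairs $(\Omega[\mathfrak{n}] \to X,\, \phi\colon S \to T)$ compatible with $\mathfrak{c}$. By Remark \ref{STAU REM}, each such term canonically identifies with $NW(J^{\phi} \to F)_{\bar\phi}(S)$ for $\bar\phi\colon S \to F$ tall, reducing the indexing to ``tight'' necklaces $J^{\phi} \to F$ paired with maps $\Omega[J^{\phi} \to F] \to X$. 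Since each $NW(J^{\phi} \to F)$ satisfies Segal levelwise (Remark \ref{NWTNS_REM}), this gives $NW(J^{\phi} \to F)_{\bar\phi}(S) \simeq \prod_v NW(J^{\phi} \to F)_{\bar\phi_v}(S_v)$.

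The main remaining step is a cofinality/gluing argument identifying the product of colimits defining $\prod_v NW(X)_{\mathfrak{c}_v}(S_v)$ with the colimit of products just obtained. I would construct this identification explicitly via grafting: given compatible vertex-wise data $(\mathfrak{n}_v\colon J_v \to T_v,\, \Omega[\mathfrak{n}_v] \to X,\, \phi_v\colon S_v \to T_v)$ whose edge-colorings agree on shared leaf-root edges, one grafts the $T_v$ into a total tree $T$ (Remark \ref{GRAFT REM}), assembles the $J_v$ into a necklace $\mathfrak{n}\colon J \to T$ via the bead decomposition of Lemma \ref{FACEINNECK LEM}(iii), and lifts to a map $\Omega[\mathfrak{n}] \to X$ using that, by Definition \ref{NECKREP_DEF}, such a map is determined by compatible maps out of the beads. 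The main obstacle is verifying that this assignment is well-defined on equivalence classes of representatives and compatible with the simplicial structure on $NW$, so that it descends to a well-defined inverse to the Segal comparison on colimits.
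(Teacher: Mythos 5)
Your strategy --- evaluate on colored trees via Remark \ref{PREOPCOLEV REM}, normalize via Remark \ref{STAU REM}, use the levelwise Segal property of $NW(\mathfrak{n})$, and finish with a grafting argument --- is the paper's strategy, and your treatment of constant objects is fine. However, the step ``reducing the indexing to `tight' necklaces'' is asserted where an argument is required. Remark \ref{STAU REM} identifies each term $NW(\mathfrak{n})_{\phi}(S)$ with its normalized counterpart, but a term-by-term identification does not by itself identify the colimits: one must show that the full subcategory of normalized objects is final/initial for the colimit. The paper does this in two stages: first, the subcategory $\mathsf{Nec}^{\Omega}_{\vect{S}//X}$ where $\phi$ is an actual tree map is a cosieve of $\mathsf{Nec}_{\vect{S}//X}$ and the functor vanishes off it (so the colimit restricts); second, the normalization $\mathfrak{n} \mapsto (J^{\phi_{\mathfrak{n}}} \to F_{\mathfrak{n}})$ is a retraction of $\mathsf{Nec}^{\Omega}_{\vect{S}//X}$ onto $\mathsf{Nec}^{\Omega,nor}_{\vect{S}//X}$, equipped with the natural comparison map $n(\mathfrak{n}) \to \mathfrak{n}$, and this forces the inclusion of the normalized subcategory to compute the same colimit. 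Without this, the passage from the ambient colimit to the normalized one is unjustified.

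For the ``cofinality/gluing argument'' you flag as the main remaining step, the paper also takes a tidier route than a direct construction of an inverse. It proves that the normalized indexing category itself decomposes as a product, $\mathsf{Nec}^{\Omega,nor}_{\vect{S}//X} \simeq \prod_{v \in \boldsymbol{V}(S)} \mathsf{Nec}^{\Omega,nor}_{\vect{S_v}//X}$ (this is exactly your grafting observation, but phrased as an equivalence of index categories rather than a map of colimits), and then appeals to the formal fact that a colimit over a product category of an external product of functors equals the product of the colimits. This sidesteps the well-definedness and simplicial-compatibility worries you raise about your proposed inverse map: once the indexing categories and the functors both factor vertex-wise, the isomorphism of colimits is automatic. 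Your direct inverse would in effect reprove that categorical fact by hand, so while your plan is not wrong, it is incomplete precisely at the two places the paper makes category-theoretic rather than element-wise.
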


\begin{proof}
	We will evaluate 
	$NW(X)$ at each $X(\eta)$-colored tree
	$\vect{S}=(S,\mathfrak{c})$
	using Remark \ref{PREOPCOLEV REM}.
	We write
	$\mathsf{Nec}_{\vect{S}//X}
	=
	\left(\mathsf{Nec}_{/X}\right)_{\vect{S}/}$
	for the category
	whose objects are pairs of arrows
	$\boldsymbol{E}(S)
	\xrightarrow{\phi_{\mathfrak{n}}} \Omega[\mathfrak{n}] \to X$
	whose composite encodes the coloring 
	$\mathfrak{c}\colon \boldsymbol{E}(T) \to X(\eta)$.
	Equation \eqref{COLIMALT EQ}
	then says that
\begin{equation}\label{NWKANEXEV EQ}
	NW(X)_{\mathfrak{c}}(S) 
	\simeq
	\underset{(\boldsymbol{E}(S) \to \Omega[\mathfrak{n}] \to X)
		\in \mathsf{Nec}_{\vect{S}//X}}{\colim}
	NW(\mathfrak{n})_{\phi_{\mathfrak{n}}}(S).
\end{equation}

	To show that $NW(X)$ satisfies the strict Segal condition, 
	we will rewrite \eqref{NWKANEXEV EQ} 
	by identifying appropriate subcategories of
	$\mathsf{Nec}_{\vect{S}//X}$.
	First, write
	$\mathsf{Nec}_{\vect{S}//X}^{\Omega}
	\subset
	\mathsf{Nec}_{\vect{S}//X}$
	for the full subcategory of those objects for which,
	writing $\mathfrak{n}\colon J \to T$,
	the map
	$\phi_{\mathfrak{n}} \colon 
	\boldsymbol{E}(S) \to \boldsymbol{E}(T)$
	gives a map 
	$\phi_{\mathfrak{n}} \colon S \to T$ in $\Omega$.

	Next, for $\phi_{\mathfrak{n}} \colon S \to T$ as above, 
	and as in Notation \ref{STAU NOT},
	we write
	$S \xrightarrow{\bar{\phi}_{\mathfrak n}} 
	F_{\mathfrak{n}} \xrightarrow{\iota} T$
	for the tall-outer factorization.
	We then write 
	$
	\mathsf{Nec}_{\vect{S}//X}^{\Omega,nor}
	\subset
	\mathsf{Nec}_{\vect{S}//X}^{\Omega}
	$
	for the full subcategory of ``normalized factorizations'',
	defined by the properties that
	$\phi_{\mathfrak{n}} \colon S \to T$ is a tall map,
	i.e. $F_{\mathfrak n}  = T$, and
	$J \supseteq \phi_{\mathfrak{n}} S$.

	Moreover, there is a retraction
	$
	\mathsf{Nec}_{\vect{S}//X}^{\Omega}
	\xrightarrow{n}
	\mathsf{Nec}_{\vect{S}//X}^{\Omega,nor}
	$
	which sends
	$\boldsymbol{E}(S) \Omega[\eta]
	\to \Omega[J \xrightarrow{\mathfrak{n}} T] \to X$
	to
	$n (\mathfrak{n}) = 
	(\phi_{\mathfrak{n}} S \vee J_{F_{\mathfrak{n}}}
	\to F_{\mathfrak{n}})
	=
	(J^{\phi_{\mathfrak{n}}} \to  F_{\mathfrak{n}})
	$
	(cf. Notation \ref{STAU NOT}).
	Recall (cf. Remark \ref{STAU REM}) 
	that the natural map
	$n( \mathfrak{n}) \to \mathfrak{n}$ in $\mathsf{Nec}$
	induces isomorphisms
	$NW(n(\mathfrak{n}))_{\bar{\phi}_{\mathfrak{n}}}(S) 
	\xrightarrow{\simeq}
	NW(\mathfrak{n})_{\phi_{\mathfrak{n}}}(S)$.

	Since 
	$\mathsf{Nec}_{\vect{S}//X}^{\Omega}$
	is a cosieve\footnote{
	Recall that a subcategory 
	$\mathcal{S} \subseteq \mathcal{C}$
	is a \emph{cosieve} if, for any map
	$s \to c$ with $s \in \mathcal{S}$,
	both $c$ and $s \to c$ are also in $\mathcal{S}$.} 
	of $\mathsf{Nec}_{\vect{S}//X}$
	and 
	$NW(\mathfrak{n})_{\mathfrak{s}}(S) = \emptyset$
	whenever $\mathfrak{n}$ is not in 
	$\mathsf{Nec}_{\vect{S}//X}^{\Omega}$
	one can replace 
	$\mathsf{Nec}_{\vect{S}//X}$
	with 
	$\mathsf{Nec}_{\vect{S}//X}^{\Omega}$
	in
	\eqref{NWKANEXEV EQ}.
	Moreover, the existence of a retraction implies that the inclusion
	$
	\mathsf{Nec}_{\vect{S}//X}^{\Omega,nor}
	\subset
	\mathsf{Nec}_{\vect{S}//X}^{\Omega}
	$
	is initial, so one can further replace 
	$\mathsf{Nec}_{\vect{S}//X}^{\Omega}$ with
	$\mathsf{Nec}_{\vect{S}//X}^{\Omega,nor}$.
	
	Lastly, note that the normalization conditions
	imply that $\mathsf{Nec}_{\vect{S}//X}^{\Omega,nor} 
	\simeq
	\prod_{v \in \boldsymbol{V}(S)}
	\mathsf{Nec}_{\vect{S_v}//X}^{\Omega,nor}$
	by a grafting argument.
	Putting everything together we now obtain that
\begin{equation}\label{NWKANEXEVEXT EQ}
\begin{aligned}
	NW(X)_{\mathfrak{c}}(S) 
\simeq &
	\underset{(\boldsymbol{E}(S) \to \Omega[\mathfrak{n}] \to X)
		\in \mathsf{Nec}^{\Omega,nor}_{\vect{S}//X}}{\colim}
	NW(\mathfrak{n})_{\phi_\mathfrak{n}}(S)
\\
\simeq &
\underset{(\boldsymbol{E}(S_v) \to \Omega[\mathfrak{n}_{T_v}] \to X)
	\in 
	\prod_{v \in \boldsymbol{V}(S)} \mathsf{Nec}^{\Omega,nor}_{\vect{S_v}//X}}{\colim}
	\left(\prod_{v \in \boldsymbol{V}(S)}
	NW(\mathfrak{n})_{\phi_{\mathfrak{n},v}}(S_v)\right)
\\
\simeq &
	\underset{(\boldsymbol{E}(S_v) \to \Omega[\mathfrak{n}_{T_v}] \to X)
		\in 
		\prod_{v \in \boldsymbol{V}(S)} \mathsf{Nec}^{\Omega,nor}_{\vect{S_v}//X}}{\colim}
	\left(\prod_{v \in \boldsymbol{V}(S)}
	NW(\mathfrak{n}_{T_v})_{\phi_\mathfrak{n},v}(S_v)\right)
\\
\simeq &
	\prod_{v \in \boldsymbol{V}(S)}
	\left(
	\underset{(\boldsymbol{E}(S_v) \to \mathfrak{n}_{T_v} \to X)
		\in \mathsf{Nec}^{\Omega,nor}_{\vect{S_v}//X}}{\colim}
	NW(\mathfrak{n}_{T_v})_{\phi_{\mathfrak{n},v}}(S_v)
	\right)
\\
\simeq &
	\prod_{v \in \boldsymbol{V}(S)} 
	NW(X)_{\mathfrak{c}_v}(S_v) 
\end{aligned}
\end{equation}
	where the first step follows from the previous paragraph,
	the second step is the identification of indexing categories
	above together with the strict Segal condition for 
	$NW(\mathfrak{n})$,
	the third step uses the middle isomorphisms in 
	Remark \ref{STAU NOT},	
	the fourth step is the fact that products commute with colimits in each variable, and the last step simply specifies
	the first step for $\vect{S_v} = (S_v,\mathfrak{c}_v)$.
	
	We have thus established the 
	strict Segal condition for $NW(X)$ so that,
	as it is clear that the objects of $NW(X)$ are discrete 
	(this is inherited from the $NW(\mathfrak{n})$),
	this finishes the proof.
\end{proof}

\begin{remark}
	The normalization condition in the previous proof
	is equivalent to requiring that 
	$\phi_{\mathfrak{n}} \colon S \to T$
	is a tall map which induces a map
	$\mathsf{Sc}[S] \to \Omega[\mathfrak{n}]$.
\end{remark}

Propositions \ref{NECKCOL PROP} and 
\ref{NWKANEX_PROP}
now combine to give the following, establishing \eqref{KEYSTRAT EQ}.

\begin{theorem}[{cf. \cite[Thm. 1.3]{DS11}}]
	\label{KANEXTCHAR THM}
	Consider the following diagram,
	where the functors labeled $W$ 
	are as defined by Definition \ref{NWTNS DEF}
	and Proposition \ref{NWKANEX_PROP}.
\begin{equation}
\begin{tikzcd}
	\Omega \ar[hookrightarrow]{r}
	\ar{rrd}[swap]{W} 
&
	\mathsf{Nec}
	\ar[hookrightarrow]{r}
	\ar{rd}{W}
&
	\mathsf{dSet}
	\ar{d}{W}
\\
&&
	\mathsf{sOp} 
\end{tikzcd}
\end{equation}
Then both triangles in this diagram are left Kan extensions.
In particular, the functor
$W \colon \mathsf{dSet} \to \mathsf{sOp}$
coincides with the functor
$W_! \colon \mathsf{dSet} \to \mathsf{sOp}$
as defined in \eqref{SOPDSET_EQ}.
\end{theorem}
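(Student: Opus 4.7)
The plan is to verify each triangle separately as a pointwise left Kan extension, and then conclude that $W = W_!$ by composition of left Kan extensions. The right triangle is essentially tautological: Proposition \ref{NWKANEX_PROP} defines $W(X)$ to be the colimit $\colim_{(\Omega[\mathfrak{n}] \to X) \in \mathsf{Nec}_{/X}} W(\mathfrak{n})$ in $\mathsf{sOp}$. Since $\mathsf{Nec} \hookrightarrow \mathsf{dSet}$ is fully faithful, this is precisely the pointwise formula for $\mathrm{Lan}_{\mathsf{Nec} \hookrightarrow \mathsf{dSet}}(W|_{\mathsf{Nec}})(X)$, so the right triangle is a left Kan extension.

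For the left triangle, the pointwise LKE formula at a necklace $\mathfrak{n} \colon J \to T$ reads $\colim_{(T', \Omega[T'] \to \Omega[\mathfrak{n}]) \in \Omega \downarrow \mathfrak{n}} W(T')$, while Proposition \ref{NECKCOL PROP} identifies $W(\mathfrak{n})$ with $\colim_{U \in \mathsf{Face}_{sc}(J)} W(T_U)$. I would prove these agree by showing the comparison functor $\mathsf{Face}_{sc}(J) \to \Omega \downarrow \mathfrak{n}$, $U \mapsto (T_U, \Omega[T_U] \hookrightarrow \Omega[\mathfrak{n}])$, is final. Given any $(T',\varphi) \in \Omega \downarrow \mathfrak{n}$, factor the underlying $\varphi \colon T' \to T$ via the tall-outer decomposition of Remark \ref{IGNPL REM} as $T' \to \overline{\varphi T'} \hookrightarrow T$. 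Since the map $\Omega[T'] \to \Omega[\mathfrak{n}]$ forces $\overline{\varphi T'} \hookrightarrow \Omega[\mathfrak{n}]$ by Lemma \ref{FACEINNECK LEM}(i), part (ii) of that lemma yields $\boldsymbol{E}^{\mathsf{i}}(J) \cap \boldsymbol{E}^{\mathsf{i}}(\overline{\varphi T'}) = \emptyset$, so $\overline{\varphi T'}$ avoids every inner edge of $J$ and is therefore contained in a unique minimal $T_U$ with $U \in \mathsf{Face}_{sc}(J)$. This minimal $U$ provides an initial object of the comma slice over $(T',\varphi)$, hence these slices are connected and the functor is final.

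Combining the two verifications makes each triangle a left Kan extension. Since left Kan extensions compose along fully faithful functors, $W \colon \mathsf{dSet} \to \mathsf{sOp}$ must coincide with $\mathrm{Lan}_{\Omega \hookrightarrow \mathsf{dSet}}(W|_\Omega)$, which by \eqref{WXDEF_EQ} is precisely the defining formula for $W_!$. The principal technical obstacle is the finality check in the left triangle: verifying that the slice really does have an initial object amounts to an explicit bookkeeping argument about outer closures and the bead decomposition. This should go through cleanly once combined with Proposition \ref{MAPNECK PROP}(ii), which ensures that morphisms in $\Omega \downarrow \mathfrak{n}$ between factorizations respect the nested bead structure, so the uniquely minimal $U$ is genuinely initial (and not merely weakly initial) in the comma slice.
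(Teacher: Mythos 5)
Your proposal is correct and follows the same high-level strategy as the paper, which gives no explicit proof but simply asserts that Propositions \ref{NECKCOL PROP} and \ref{NWKANEX_PROP} ``combine to give'' the theorem. The route you take for the left triangle, however, is a genuine alternative to what the paper most plausibly intends. The slicker implicit argument is: $W_!$ is a left adjoint, hence colimit-preserving, and Definition \ref{NECKREP_DEF} exhibits $\Omega[\mathfrak{n}]$ as $\colim_{U \in \mathsf{Face}_{sc}(J)} \Omega[T_U]$ in $\dSet$; thus $W_!(\Omega[\mathfrak{n}]) \simeq \colim_U W_!(\Omega[T_U]) = \colim_U W(T_U) \simeq W(\mathfrak{n})$ by Proposition \ref{NECKCOL PROP}, and the left-hand side is by definition the pointwise left Kan extension at $\mathfrak{n}$. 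Your finality argument for $\mathsf{Face}_{sc}(J) \to \Omega \downarrow \mathfrak{n}$ reaches the same identification without invoking colimit preservation of $W_!$, and the bookkeeping you flag as the ``principal technical obstacle'' does go through: by Lemma \ref{FACEINNECK LEM}(ii),(iii), an outer face of $T$ avoiding $\boldsymbol{E}^{\mathsf{i}}(J)$ has all its vertices in a single bead (adjacent vertices cannot lie in distinct beads without crossing an inner edge of $J$), and if it has no vertices it is a single edge lying in a unique minimal $T_U$; monicity of the inclusions $T_{U'} \hookrightarrow T$ then forces the factorization through this minimal $T_U$ to be genuinely initial rather than merely weakly initial in the comma slice. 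So both approaches work; yours is slightly more elementary, the paper's is shorter. One small terminological note: left Kan extensions compose by plain transitivity $\Lan_{KJ}F \cong \Lan_K\Lan_J F$; full faithfulness of the inclusions is used only to identify $\Lan_{\Omega\hookrightarrow\mathsf{Nec}}(W|_\Omega)$ with the given $W|_{\mathsf{Nec}}$, not to make the composition itself go through.
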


\begin{remark}\label{TAUFUNEX REM}
	Our arguments so far can readily be modified 
	to also describe $\tau$ in the 
	$\tau \colon \dSet \rightleftarrows \Op \colon N$
	adjunction in \eqref{SOPDSET_EQ}.
	First, Remark \ref{SEGFORNECK REM} implies that,
	for any necklace 
	$\mathfrak{n} \colon J \to T$ it must be 
	$\tau \Omega[\mathfrak{n}] \simeq
	\tau \Omega[T] = \Omega(T)$,
	i.e. the operad such that $N\Omega(T) = \Omega[T]$,
	cf. Example \ref{OMTSEG EX}.
	We note that, in light of Remark \ref{GRAFT REM},
	this last observation is the analogue of 
	Proposition \ref{NECKCOL EQ} for $\tau$.
	
	Adapting the proofs of 
	Proposition \ref{NWKANEX_PROP} and
	Theorem \ref{KANEXTCHAR THM},
	and writing $\mathfrak{c},\phi$ as therein, one then has
\begin{equation}
\begin{aligned}
	\left(N \tau X\right)_{\mathfrak{c}}(S) 
	\simeq &
\underset{(\boldsymbol{E}(S) \to 
	\Omega[J \to T] \to X)
	\in \mathsf{Nec}_{\vect{S}//X}}{\colim}
	\Omega[T]_{\phi}(S)
	\simeq &
\underset{(\boldsymbol{E}(S) \to 
	\Omega[J \to T] \to X)
	\in \mathsf{Nec}^{\Omega,nor}_{\vect{S}//X}}{\colim}
	\Omega[T]_{\phi}(S).
\end{aligned}
\end{equation}
Moreover, the normalization conditions guarantee
that one tautologically has  
$\Omega[T]_{\phi}(S) = \**$
in the rightmost formula
(as $\Omega[T]_{\phi}(S)$ consists of elements lifting the prescribed $\phi\colon S \to T$).
Thus, by unpacking the left formula and specifying to the case of
$S=C$ a corolla,
one has that operations in
$\tau X(C)$
are represented by 
data of the form
$
	\Omega[C]
	\xrightarrow{t}
	\Omega[T]
	\leftarrow
	\Omega[J \to T]
	\to X
$
subject to the equivalence relation
generated by deeming two such data to be equivalent whenever there exists a map of necklaces
$(J \to T) \to (J' \to T')$
making the diagram below commute.
\[
\begin{tikzcd}
	\Omega[C] \ar{r}{t} \ar[equal]{d} &
	\Omega[T] \ar{d} &
	\Omega[J \to T] \ar{l} \ar{r} \ar{d} &
	X \ar[equal]{d}
\\
	\Omega[C] \ar{r}{t} &
	\Omega[T'] &
	\Omega[J' \to T'] \ar{l} \ar{r} &
	X
\end{tikzcd}
\]
\end{remark}

\begin{remark}\label{GTAUFUNEX REM}
	The work in this paper can be adapted to the categories
	$\mathsf{dSet}_G$ and $\mathsf{Op}_G$
	of \textit{genuine equivariant dendroidal sets} and \textit{genuine equivariant operads},
	introduced in \cite[\S 5.4]{Per18} and \cite{BP_geo}, respectively.
	In particular, 
	the ``genuine operadification'' functor
	$\tau_G \colon \mathsf{dSet}_G \to \mathsf{Op}_G$
	from
	\cite[ \eqref{TAS-TAUFUNCTS EQ}]{BP_TAS}
	can be described via an analogue of
	Remark \ref{TAUFUNEX REM}.

Briefly, $G$-trees are defined as follows. 
First, the category $\Phi$ of forests has objects
formal coproducts $\amalg_{i\in I} T_i$ of trees,
with maps $\amalg_{i\in I} T_i \to \amalg_{j\in J} S_j$
given by a map of sets $\varphi \colon I \to J$ together with maps
$T_i \to S_{\varphi (i)}$ of trees.
Then, $G$-forests $\Phi^G$ are defined as the $G$-objects in $\Phi$,
while $G$-trees $\Omega_G \subset \Phi^G$
are the full subcategory of $G$-forests for which
the $G$-action is transitive on tree components.
Extending Proposition \ref{TREEFACT_PROP},
maps in $\Omega_G$ likewise have a standard factorization
\cite[Cor. \ref{TAS-OMGFACT COR}]{BP_TAS},
which allows for a generalization of the work herein.

For instance, an \emph{equivariant necklace}
is a map $\mathfrak{n} \colon J \to T$ of $G$-trees
that is a planar orbital inner face 
\cite[Def. \ref{TAS-DENDNECK DEF}]{BP_TAS}.
Explicitly, this means that $\mathfrak{n}$
is an ordered isomorphism on roots/components which is a planar inner face on each tree component.
Moreover, letting $\Omega[T] \in \dSet^G$ for $T \in \Omega_G$
 be the representables 
in \cite[\S\ref{TAS-EDS_SEC}]{BP_TAS},
one may define $\Omega[J \to T]$ just as in 
\eqref{NECKLACE_DEF}.
Altogether, adapting Remark \ref{TAUFUNEX REM},
one has that for each $G$-corolla $C$ (i.e. $G$-tree whose tree components are corollas)
the operations in 
$\tau_G X(C) \in \Op_G$
can be represented by data
$\Omega[C] \xrightarrow{t,r}
\Omega[T] \leftarrow
\Omega[J \to T] \to 
X$
(where the map labeled $t,r$ induces an ordered isomorphism on roots which is tall in each component)
subject to the equivalence relation generated by diagrams
\[
\begin{tikzcd}
\Omega[C] \ar{r}{t,r} \ar[equal]{d} &
\Omega[T] \ar{d} &
\Omega[J \to T] \ar{l} \ar{r} \ar{d} &
X \ar[equal]{d}
\\
\Omega[C] \ar{r}{t,r} &
\Omega[T'] &
\Omega[J' \to T'] \ar{l} \ar{r} &
X.
\end{tikzcd}
\]
\end{remark}

Theorem \ref{KANEXTCHAR THM} established that 
$W_!\colon \dSet \to \sOp$ is computed by \eqref{NWKANEX EQ},
which is the hard technical step in establishing Theorem \ref{THMB}.
Thus, the remainder of the paper will mostly unpack \eqref{NWKANEX EQ}
to obtain the description of $NW(X)$ for $X \in \dSet$
featured in Theorem \ref{THMB},
with the following establishing the non-unique
description,
reformulated using the spaces $X_{\mathfrak{c}}(T)$ in
Notation \ref{MAPSOVCOL NOT}.

\begin{corollary}[{cf. \cite[Cor. 4.4]{DS11}}]
	\label{NWXREPS COR}
	Let $X\in \mathsf{dSet}$.
	Then the simplices in
	$NW(X)_{n,\mathfrak{c}}(S)$
	for a coloring 
	$\mathfrak{c}\colon \boldsymbol{E}(T) \to X(\eta)$
	are equivalence classes of quadruples
	$(\mathfrak{n}, S \xrightarrow{\phi} T, \Omega[\mathfrak{n}] \xrightarrow{x} X, J_{\bullet})$ 
	where:
	\begin{enumerate}[label=(\roman*)]
		\item $(J \xrightarrow{\mathfrak{n}} T) \in \mathsf{Nec}$ is a necklace; 
		\item $S \xrightarrow{\phi} T$
		is a tall map in $\Omega$
		such that $J \supseteq \phi S$
		(equivalently, $\phi$ induces a map
		$\mathsf{Sc}[S] \to \Omega[\mathfrak{n}]$);		
		\item $\Omega[ \mathfrak{n}] \to X$ is a map in $\mathsf{dSet}$
		such that the induced composite
		$\boldsymbol{E}(S) \to 
		\boldsymbol{E}(T) \to X(\eta)$
		is the coloring $\mathfrak{c}$;
		\item $J_{\bullet}$ denotes a simplex in $NW(\mathfrak{n})_{n,\phi}$, i.e.
		a factorization of $\phi$	
	\begin{equation}\label{STRINCOR EQ}
	\begin{tikzcd}
		S \ar{r}{t}
	&
		J_0 \ar{r}{i,p}
	&
		J_1 \ar{r}{i,p}
	&
		\cdots
		\ar{r}{i,p}
	&
		J_n \ar{r}{i,p}
	&
		T
	\end{tikzcd}
	\end{equation}
		such that 
		$J_0 \supseteq J$.
	\end{enumerate}
The equivalence relation is generated by considering 
$(\mathfrak{n},\phi,x,J_{\bullet})$ and
$(\mathfrak{n}',\phi',x',J_{\bullet}')$
to be equivalent if there is
a map
$\varphi \colon \Omega[\mathfrak{n}] \to \Omega[\mathfrak{n}']$
such that
$\phi' = \varphi \phi$,
$x = x' \varphi $
and
$J'_k = \varphi J_k$
(i.e $J'_{\bullet}$
is obtained by pushing 
$J_{\bullet}$ along $\varphi$
in the sense of \eqref{NWTTSTAR EQ}).
\end{corollary}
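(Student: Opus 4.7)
The plan is to unpack the colimit identification
\[
NW(X)_{n,\mathfrak{c}}(S) \simeq \underset{(\boldsymbol{E}(S) \to \Omega[\mathfrak{n}] \to X)\, \in\, \mathsf{Nec}_{\vect{S}//X}}{\colim} NW(\mathfrak{n})_{n,\phi_{\mathfrak{n}}}(S)
\]
supplied by Theorem \ref{KANEXTCHAR THM} (via Proposition \ref{NWKANEX_PROP}), and invoke the indexing-category reductions carried out inside the proof of that proposition.

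First, I would recall from that proof that one may replace $\mathsf{Nec}_{\vect{S}//X}$ with the cosieve $\mathsf{Nec}^{\Omega}_{\vect{S}//X}$ of those data for which $\phi_{\mathfrak{n}}$ actually defines a map of trees (the other components contribute empty sets), and then further with the initial subcategory $\mathsf{Nec}^{\Omega,nor}_{\vect{S}//X}$ of normalized data. The objects of the latter category are, by construction, triples $(\mathfrak{n}\colon J \to T,\, \phi\colon S \to T,\, \Omega[\mathfrak{n}] \xrightarrow{x} X)$ with $\phi$ tall, $J \supseteq \phi S$, and with $\boldsymbol{E}(S) \to X(\eta)$ recovering $\mathfrak{c}$, matching conditions (i)--(iii). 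Morphisms there are maps of necklaces $\varphi\colon \mathfrak{n} \to \mathfrak{n}'$ --- equivalently maps $T \to T'$ in $\Omega$ by Proposition \ref{MAPNECK PROP}(i) --- compatible with $\phi, \phi'$ and with $x, x'$.

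Next I would unpack what a simplex of $NW(\mathfrak{n})_{n,\phi}(S)$ amounts to over a normalized object. By Definition \ref{NWTNS DEF} and Remark \ref{J0JFREPACK REM}, such a simplex is a string \eqref{NWTDEF EQ} subject to $J_0 \supseteq J_{\overline{\phi S}}$. Since $\phi$ is tall in the normalized setting, $\overline{\phi S} = T$, so $J_{\overline{\phi S}} = J_T = J$ (cf. Notation \ref{JFNOT NOT}), reducing the constraint to $J_0 \supseteq J$. Moreover, the tail $J_n \xrightarrow{f,p} F = T$ in \eqref{NWTDEF EQ} becomes the inner planar face $J_n \xrightarrow{i,p} T$ appearing in \eqref{STRINCOR EQ}. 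This produces condition (iv) exactly.

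Finally, the standard description of an $\mathsf{Nec}^{\Omega,nor}_{\vect{S}//X}$-indexed colimit of sets identifies elements with pairs $((\mathfrak{n},\phi,x), J_\bullet)$ modulo the equivalence generated by $((\mathfrak{n},\phi,x), J_\bullet) \sim ((\mathfrak{n}',\phi',x'), \varphi_{\**} J_\bullet)$ for each morphism $\varphi$ in the indexing category. By the functoriality formula \eqref{NWTTSTAR EQ} (cf. Proposition \ref{NWTNS_NAT_PROP} and Corollary \ref{NWTNS_NNAT_COR}), the pushforward $\varphi_{\**} J_\bullet$ has $k$-th term $\varphi J_k$, reproducing the relation stated in the corollary. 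I expect no genuine obstacle: the technical substance was already absorbed by Proposition \ref{NWKANEX_PROP}, and what remains is careful bookkeeping to match the normalized colimit description against the quadruple presentation.
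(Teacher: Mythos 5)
Your proposal is correct and takes essentially the same approach as the paper: unpack the colimit formula \eqref{NWKANEXEV EQ}, use the reduction to the normalized indexing category $\mathsf{Nec}^{\Omega,nor}_{\vect{S}//X}$ established in the proof of Proposition \ref{NWKANEX_PROP}, and match the resulting data against the quadruple description. Your explicit observation that $J_{\overline{\phi S}} = J$ once $\phi$ is tall, reducing the constraint $J_0 \supseteq J_{\overline{\phi S}}$ of Definition \ref{NWTNS DEF} to $J_0 \supseteq J$, is a useful clarification the paper leaves implicit.
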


\begin{proof}
	Conditions (i),(iii),(iv) follow by simply unpacking \eqref{NWKANEXEV EQ} in light of the construction of 
	$NW(\mathfrak{n})$ in Definition \ref{NWTNS DEF}
	(except with $\phi$ then just a map
	$\phi\colon \boldsymbol{E}(S) \to \boldsymbol{E}(S)$
	and the last map in (iv) only required to be tall rather than inner).
	The additional condition (ii) follows by replacing 
	\eqref{NWKANEXEV EQ} with its reduction to 
	``normalized
	factorizations''
	$\mathsf{Nec}_{\vec{S}//X}^{\Omega,nor}$,
	as in the first line of \eqref{NWKANEXEVEXT EQ}.
\end{proof}

Our last goal is to complete the proof of Theorem \ref{THMB}
by showing that, as claimed therein, 
the quadruples in Corollary \ref{NWXREPS COR}
always have a nice suitably unique representative.

We first discuss uniqueness of the maps
$\Omega[\mathfrak{n}] \xrightarrow{x} X$ up to degeneracy.

\begin{definition}\label{NECKDEG DEF}
	A map of necklaces 
	$(J\to T) \to (J' \to T')$
	is called a \emph{necklace degeneracy}
	if the associated map
	$\varphi \colon T \to T'$
	is a degeneracy in $\Omega$
	and $\varphi J = J'$.
\end{definition}

\begin{definition}[{cf. \cite[\S 4]{DS11}}]
        \label{TOTNONDEG_DEF}
	Let $J \xrightarrow{\mathfrak{n}} T$ be a necklace and 
	$X \in \mathsf{dSet}$.
	A map $\Omega[\mathfrak{n}] \to X$
	is called \emph{totally non-degenerate}
	if for all beads $T_b$, 
	$b \in \boldsymbol{B}(\mathfrak{n})$
	the induced dendrex
	$\Omega[T_b] \to X$
	is non-degenerate (in the sense of, 
	e.g. \cite[Prop. 5.62]{Per18}).
\end{definition}

\begin{lemma}[{cf. \cite[Prop. 4.7]{DS11}}]
	\label{DEGNECK LEM}
	Any map 
	$\Omega[\mathfrak{n}] \to X$
	has a factorization, unique up to unique isomorphism, as
\[
	\Omega[J \xrightarrow{\mathfrak{n}} T] \to 
	\Omega[J' \xrightarrow{\mathfrak{n'}} T'] \to X
\]
	where the first map is a degeneracy of necklaces
	and the second map is totally non-degenerate.
\end{lemma}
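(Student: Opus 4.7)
The plan is to reduce the statement to the classical Eilenberg--Zilber-type factorization for individual dendrices, applied bead-by-bead, and then glue the results together using the joint structure of the necklace.

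First, by Remark~\ref{SEGFORNECK REM} a map $x\colon \Omega[\mathfrak{n}] \to X$ is equivalent data to a compatible family of dendrices $x_b \colon \Omega[T_b] \to X$ indexed by $b \in \boldsymbol{B}(\mathfrak{n})$, with compatibility imposed along the joint edges $\boldsymbol{E}^{\mathsf{i}}(J)$. Applying the standard non-degeneracy factorization for dendrices (\cite[Prop. 5.62]{Per18}) to each bead yields, uniquely up to unique isomorphism, a decomposition
\[
\Omega[T_b] \xrightarrow{\delta_b} \Omega[T'_b] \xrightarrow{\bar{x}_b} X
\]
with $\delta_b$ a planar degeneracy and $\bar{x}_b$ non-degenerate.

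Next I would glue the $T'_b$ into a new necklace. Since each $\delta_b$ is a planar degeneracy, it is a bijection on leaves and preserves the root; in particular it is a bijection on the outer edges of the bead. Thus for each joint edge $e \in \boldsymbol{E}^{\mathsf{i}}(J)$---which is a leaf of one adjacent bead and the root of the other---its images in the corresponding $T'_b$ occupy matching positions as outer edges, so the grafting pattern encoded by $J$ transports unambiguously. Let $T'$ be the resulting grafted planar tree, and let $J'$ have the same combinatorial structure as $J$ with edges relabeled through the $\delta_b$. The $\delta_b$ amalgamate into a planar degeneracy $\varphi \colon T \to T'$, and by construction $\varphi J = J'$ and $\mathfrak{n}' \colon J' \to T'$ is a planar inner face, giving a necklace degeneracy $\mathfrak{n} \to \mathfrak{n}'$ in the sense of Definition~\ref{NECKDEG DEF}. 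The $\bar{x}_b$ assemble into a map $\Omega[\mathfrak{n}'] \to X$ because on any joint edge $e$ the restrictions $\bar{x}_b|_e$ and $\bar{x}_{b'}|_e$ are both non-degeneracy factors of the single edge-dendrex $\Omega[\eta] \to X$ determined by $x$, and hence coincide by uniqueness of the factorization at the level of $\eta$. The resulting map $\Omega[\mathfrak{n}'] \to X$ is totally non-degenerate since each $\bar{x}_b$ is, and it visibly composes with $\Omega[\mathfrak{n}] \to \Omega[\mathfrak{n}']$ to recover $x$.

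For the uniqueness clause, suppose $\Omega[\mathfrak{n}] \to \Omega[\mathfrak{n}'_i] \to X$ ($i=1,2$) are two such factorizations. Restricting to each bead $T_b$ produces two non-degeneracy factorizations of $x_b$, and the uniqueness in \cite[Prop. 5.62]{Per18} yields a unique isomorphism $T'_{1,b} \xrightarrow{\sim} T'_{2,b}$ intertwining them. Edge-level uniqueness on joints ensures that these bead-wise isomorphisms are compatible, and they therefore assemble into a unique isomorphism $\mathfrak{n}'_1 \xrightarrow{\sim} \mathfrak{n}'_2$ of necklaces over $X$.

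The main obstacle I anticipate is the gluing step: verifying carefully that the independently chosen planar degeneracies $\delta_b$ really do fit into a single planar map $\varphi \colon T \to T'$ in $\Omega$, and that the joint edges are identified consistently on the two sides so that $\mathfrak{n}' \colon J' \to T'$ is a genuine planar inner face rather than a purely formal aggregate of bead-level data. Once this structural gluing is settled, the remaining verifications are formal consequences of the uniqueness of the bead-wise and edge-wise non-degeneracy factorizations.
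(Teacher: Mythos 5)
Your argument is correct and rests on the same core idea as the paper's proof: reduce to the single-bead Eilenberg--Zilber factorization \cite[Prop. 5.62]{Per18} and then reassemble along the joints. The difference is in how the reassembly is organized. The paper proceeds by induction on the number of joint edges $|\boldsymbol{E}^{\mathsf{i}}(J)|$, at each step splitting $T = R \amalg_e S$ along a single joint $e$ and applying the induction hypothesis to the two smaller necklaces $\mathfrak{n}_R$, $\mathfrak{n}_S$; this keeps the grafting step binary, the standard and well-understood operation (cf.\ Remark~\ref{GRAFT REM}). You instead factor all beads simultaneously and then glue the degeneracies $\delta_b$ into a single planar degeneracy $\varphi \colon T \to T'$ in one shot. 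This does go through, for the reasons you give (degeneracies are bijective on outer edges, so the joint structure transports unambiguously, and the $\bar{x}_b$ agree on joints because $\bar{x}_b \delta_b = x_b$ restricted to a joint is just the edge-dendrex of $x$), but you rightly identify the simultaneous gluing as the step requiring the most care; the inductive argument sidesteps precisely that concern, at the cost of some bookkeeping with the restricted necklaces $\mathfrak{n}_R$, $\mathfrak{n}_S$. One small mis-citation: the identification of maps $\Omega[\mathfrak{n}] \to X$ with compatible families of bead dendrices agreeing on joints is a direct consequence of the colimit formula defining $\Omega[\mathfrak{n}]$ in Definition~\ref{NECKREP_DEF}, not of Remark~\ref{SEGFORNECK REM}, which concerns the Segal condition on $X$ and does not apply to a general $X \in \dSet$.
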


\begin{proof}
	The proof is by induction on the size of 
	$\boldsymbol{E}^{\mathsf{i}}(J)$.
	The base case is that of $\boldsymbol{E}^{\mathsf{i}}(J) = \emptyset$
	(note that then it must also be $\boldsymbol{E}^{\mathsf{i}}(J') = \emptyset$),
	in which case the result reduces to 
	\cite[Prop. 6.9]{CM11} or
	\cite[Prop. 5.62]{Per18}.

	Otherwise, let $e \in \boldsymbol{E}^{\mathsf{i}}(J)$
	and consider the grafting decomposition
	$T = R \amalg_e S$.
	By the induction hypothesis, 
	one has factorizations, unique up to unique isomorphism,
	$\Omega[\mathfrak{n}_{R}] \to \Omega[\mathfrak{n}_R'] \to X$,
	$\Omega[\mathfrak{n}_S] \to \Omega[\mathfrak{n}_{S}'] \to X$.
	Writing 
	$\mathfrak{n}_R' = (J'_R \to R')$
	and 
	$\mathfrak{n}_S' = (J'_S \to S')$,
	we then set
	$\mathfrak{n}'= \left(
	J'_R \amalg_e J'_S \to T'_R \amalg_e T'_S
	\right)$.
	The uniqueness up to unique isomorphism property of
	$\mathfrak{n}'$ is readily seen to be inherited from 
	the analogue property for 
	$\mathfrak{n}'_R,\mathfrak{n}'_S$
	(note that the ``unique isomorphism'' clause
	implies that there is no ambiguity concerning the grafting edge $e$),
	finishing the proof.
\end{proof}

Next, we also need a preferred form for the tall simplex data in \eqref{STRINCOR EQ}.

\begin{definition}[{cf. \cite[\S 4]{DS11}}]
        \label{FLANKED_DEF}
	A tall simplex as in 
	\eqref{STRINCOR EQ}
	is called \emph{flanked}
	if $J_0 = J$
	and $J_n = T$.
	Further, a quadruple
	$(\mathfrak{n},\phi,x,J_{\bullet})$
	is called \emph{flanked} if $J_{\bullet}$ is.
\end{definition}

\begin{remark}
	Suppose $(\mathfrak{n},\phi,x,J_{\bullet})$
	is a flanked quadruple
	and set $\mathfrak{n}_k = (J_k \to T)$.
	Then the structure maps in \eqref{STRINCOR EQ}
	induce a diagram of maps of necklaces
\begin{equation}
\begin{tikzcd}[column sep=12pt]
	\mathsf{Sc}[T]
	\ar[equal]{r} 
&
	\Omega[\mathfrak{n}_n]
	\ar{r}
&
	\Omega[\mathfrak{n}_{n-1}]
	\ar{r}
&
	\cdots
	\ar{r}
&
	\Omega[\mathfrak{n}_0]
	\ar[equal]{r}
&
	\Omega[\mathfrak{n}]
&
	\mathsf{Sc}[S]
	\ar{l}
\end{tikzcd}
\end{equation} 
\end{remark}

\begin{remark}\label{FLNKNECDEG REM}
	If both simplices 
	$J_{\bullet},J'_{\bullet}$ in a pushforward diagram
	\eqref{NWTTSTAR EQ}
	are flanked,
	then the associated map of necklaces
	$\mathfrak{n} \to \mathfrak{n}'$
	is a degeneracy.
\end{remark}

In what follows we say a quadruple
$(\mathfrak{n},\phi,x,J_{\bullet})$
as in Corollary \ref{NWXREPS COR}
is \emph{flanked} if $J_{\bullet}$ is and
\emph{totally non-degenerate} if $x$ is.

\begin{lemma}[{cf. \cite[Lemma 4.5]{DS11}}]
	\label{FLANKING LEM}
	\begin{enumerate}[label=(\roman*)]
		\item Any quadruple $(\mathfrak{n},\phi,x,J_{\bullet})$
		as in Corollary \ref{NWXREPS COR} is equivalent a flanked one;
		\item if two flanked quadruples are equivalent, then the equivalence can be described via a zigzag involving only flanked quadruples.
	\end{enumerate}
\end{lemma}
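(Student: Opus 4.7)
The plan for (i) is a two-step flanking procedure. Given $(\mathfrak{n}, \phi, x, J_\bullet)$ with $\mathfrak{n}\colon J \to T$, I first enlarge the tree of joints. Since the factorization $S \xrightarrow{t} J_0 \xrightarrow{i,p} \cdots \xrightarrow{i,p} J_n \xrightarrow{i,p} T$ witnesses $J \subseteq J_0$ as planar inner faces of $T$ (using the hypothesis $J_0 \supseteq J$), define $\mathfrak{n}^{(1)} = (J_0 \to T)$. By Lemma \ref{FACEINNECK LEM}(ii), enlarging $J$ to $J_0$ shrinks the set of admissible outer faces, giving $\Omega[\mathfrak{n}^{(1)}] \hookrightarrow \Omega[\mathfrak{n}]$; restrict $x$ along this inclusion to $x^{(1)}$, and keep $\phi$ and $J_\bullet$. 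The identity map $T \to T$ then encodes a one-step equivalence in the sense of Corollary \ref{NWXREPS COR} between $(\mathfrak{n}^{(1)}, \phi, x^{(1)}, J_\bullet)$ and $(\mathfrak{n}, \phi, x, J_\bullet)$.

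The second step truncates the top tree. Since $J_0 \subseteq J_n$ as planar inner faces of $T$, the composite $J_0 \to J_n$ is itself a planar inner face, so $\mathfrak{n}^{(2)} = (J_0 \to J_n)$ is a necklace. Factor $\phi$ through $J_n$ as $\phi^{(2)}\colon S \to J_n$; restrict $x^{(1)}$ along $\Omega[\mathfrak{n}^{(2)}] \hookrightarrow \Omega[\mathfrak{n}^{(1)}]$ (induced by the outer face $J_n \hookrightarrow T$) to obtain $x^{(2)}$; and reinterpret $J_\bullet$ as a factorization $S \xrightarrow{t} J_0 \xrightarrow{i,p} \cdots \xrightarrow{i,p} J_n \xrightarrow{=} J_n$ ending in the identity. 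The tree map $J_n \hookrightarrow T$ gives the one-step equivalence $(\mathfrak{n}^{(2)}, \phi^{(2)}, x^{(2)}, J_\bullet^{(2)}) \sim (\mathfrak{n}^{(1)}, \phi, x^{(1)}, J_\bullet)$. The remaining hypothesis $J_0 \supseteq \phi^{(2)} S$ is automatic from the factorization, so the resulting quadruple is flanked.

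For (ii), the strategy is to apply flanking termwise along a zigzag. Given a zigzag $Q_0 \sim Q_1 \sim \cdots \sim Q_m$ of one-step equivalences with $Q_0, Q_m$ flanked, let $\tilde Q_i$ denote the flanking of $Q_i$ produced by (i); note $\tilde Q_0 = Q_0$ and $\tilde Q_m = Q_m$ since flanking an already-flanked quadruple returns it. The key claim is that each one-step equivalence $Q_i \sim Q_{i+1}$, encoded by a tree map $\varphi\colon T_i \to T_{i+1}$ with $J_k^{(i+1)} = \varphi J_k^{(i)}$, descends to a one-step equivalence $\tilde Q_i \sim \tilde Q_{i+1}$. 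Indeed, the identities $J_0^{(i+1)} = \varphi J_0^{(i)}$ and $J_n^{(i+1)} = \varphi J_n^{(i)}$ imply that $\varphi$ restricts and corestricts to a map $\tilde\varphi\colon J_n^{(i)} \to J_n^{(i+1)}$ which, by Proposition \ref{MAPNECK PROP}(ii) applied with $\tilde\varphi J_n^{(i)} = J_n^{(i+1)}$, induces a map of necklaces $\tilde{\mathfrak{n}}_i \to \tilde{\mathfrak{n}}_{i+1}$. The compatibility identities $\tilde\phi_{i+1} = \tilde\varphi \tilde\phi_i$, $\tilde x_i = \tilde x_{i+1} \tilde\varphi$, and $\tilde J_k^{(i+1)} = \tilde\varphi \tilde J_k^{(i)}$ are then immediate from the corresponding identities for $\varphi$ by injectivity of $J_n^{(i+1)} \hookrightarrow T_{i+1}$ and the definition of restriction of $x$.

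The main technical nuisance is just bookkeeping: verifying across the two steps of (i) that all the conditions (i)--(iv) of Corollary \ref{NWXREPS COR}, and in particular the condition $J_0 \supseteq J_{\overline{\phi S}}$ of Remark \ref{J0JFREPACK REM} unpacked in Definition \ref{NWTNS DEF}, continue to hold for the modified quadruples, and that all three equivalence identities $\phi' = \varphi \phi$, $x = x' \varphi$, $J'_k = \varphi J_k$ hold with the correct orientation. None of this requires new ideas, but care is needed to ensure the flanking in (i) depends only on the data $(J_0, J_n)$, so that it is compatible with the restrictions induced by $\varphi$ in the zigzag.
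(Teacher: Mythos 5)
Your proof is correct and follows essentially the same strategy as the paper's: flank via the truncation at $J_n$, then argue termwise naturality along the zigzag for (ii). The one cosmetic difference is that you split the flanking of (i) into two one-step equivalences (first enlarge $J$ to $J_0$ via $\mathrm{id}_T$, then truncate $T$ to $J_n$ via the face $J_n \hookrightarrow T$), whereas the paper packages this into a single pushforward along the necklace map $(J_0 \to J_n) \to (J \to T)$ induced by $J_n \hookrightarrow T$, which is legitimate by Proposition \ref{MAPNECK PROP}(ii) since $J_0 \supseteq J$; either way the output flanked quadruple is the same, and your more detailed verification of the naturality needed for (ii) — that $\varphi$ restricts and corestricts to $\tilde{\varphi}\colon J_n^{(i)} \to J_n^{(i+1)}$ satisfying the hypothesis of Proposition \ref{MAPNECK PROP}(ii) — matches what the paper's diagram in the proof is implicitly asserting.
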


\begin{proof}
	The key to (i) is the fact that 
	the map $J_n \to T$
	induces a map of necklaces
	$(J_0 \to J_n) \to (J \to T)$.
	This map of necklaces induces a pushforward of simplices
	(i.e. a diagram as in \eqref{NWTTSTAR EQ})
\begin{equation}\label{FLANKING EQ}
\begin{tikzcd}
	S \ar{r}{t} \ar[equal]{d}
&
	J_0 \ar{r}{i,p} \ar[equal]{d}
&
	J_1 \ar{r}{i,p} \ar[equal]{d}
&
	\cdots
	\ar{r}{i,p}
&
	J_n \ar[equal]{r} \ar[equal]{d}
&
	J_n \ar{d}
\\
	S \ar{r}{t}
&
	J_0 \ar{r}{i,p}
&
	J_1 \ar{r}{i,p}
&
	\cdots
	\ar{r}{i,p}
&
	J_n \ar{r}{i,p}
&
	T
\end{tikzcd}
\end{equation}	
	where the top simplex (and thus the associated quadruple) is now flanked, so (i) follows.

	(ii) then follows by noting that the procedure above is natural.
	More precisely, 
	an arbitrary pushforward of tall simplices
	(i.e. simplices whose composite map is a tall map)
	along the necklace map $(J,T) \to (J^{\**} \to T^{\**})$
	as in \eqref{NWTTSTAR EQ}
	induces a pushforward of flanked simplices
\begin{equation}
\begin{tikzcd}
	S \ar{r}{t} \ar[equal]{d}
&
	J_0 \ar{r}{i,p} \ar{d}{d}
&
	J_1 \ar{r}{i,p} \ar{d}{d}
&
	\cdots \ar{r}{i,p}
&
	J_n \ar[equal]{r} \ar{d}{d}
&
	J_n \ar{d}
\\
	S \ar{r}{t} 
&
	J'_0 \ar{r}{i,p}
&
	J'_1 \ar{r}{i,p}
&
	\cdots \ar{r}{i,p}
&
	J'_n \ar{r}{f,p}
&
	T'
\end{tikzcd}
\end{equation}
along the necklace map
$(J_0 \to J_n) \to (J'_0 \to J'_n)$.
\end{proof}

\begin{corollary}[{cf. \cite[Cor. 4.8]{DS11}}]
        \label{NWXREPS2_COR}
	Each quadruple $(\mathfrak{n},\phi,x,J_{\bullet})$ as in Corollary \ref{NWXREPS COR}
	has a representative, unique up to unique isomorphism,
	which is both flanked and totally non-degenerate.
\end{corollary}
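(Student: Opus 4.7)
The plan is to address existence and uniqueness separately, using Lemmas \ref{FLANKING LEM} and \ref{DEGNECK LEM} as the two main tools and verifying that they are mutually compatible.

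For \textbf{existence}, I will first apply Lemma \ref{FLANKING LEM}(i) to pass to an equivalent flanked representative, and then apply Lemma \ref{DEGNECK LEM} to its structure map $x$. The latter yields a necklace degeneracy $\Omega[\mathfrak{n}] \to \Omega[\mathfrak{n}']$ followed by a totally non-degenerate map $\Omega[\mathfrak{n}'] \to X$; pushing $J_\bullet$ (and $\phi$) along this degeneracy via \eqref{NWTTSTAR EQ} will produce the desired flanked, totally non-degenerate representative. The only verification needed is that the pushforward of a flanked simplex along a necklace degeneracy is again flanked: since such a pushforward sends $J_k$ to $\varphi J_k$, with $\varphi\colon T \to T'$ the underlying tree degeneracy, and since a necklace degeneracy satisfies $\varphi J = J'$ by Definition \ref{NECKDEG DEF}, we obtain $J_0' = \varphi J = J'$ and $J_n' = \varphi T = T'$, the latter because $\varphi$ is surjective on edges.

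For \textbf{uniqueness}, I will combine Lemma \ref{FLANKING LEM}(ii), Remark \ref{FLNKNECDEG REM}, and the uniqueness clause of Lemma \ref{DEGNECK LEM}. Given two equivalent flanked, totally non-degenerate quadruples, Lemma \ref{FLANKING LEM}(ii) supplies a zigzag of flanked quadruples connecting them, and Remark \ref{FLNKNECDEG REM} identifies each edge of this zigzag as a necklace degeneracy. The key observation is then the following invariance principle: if $Q \to Q'$ is a necklace degeneracy between flanked quadruples, then the totally non-degenerate form of $Q$ (built as in the existence step) is canonically isomorphic to that of $Q'$. This follows from the uniqueness clause in Lemma \ref{DEGNECK LEM} applied to the structure map $x$ of $Q$, which admits a totally non-degenerate factorization in two ways: either directly or via the composite through $Q'$'s totally non-degenerate form. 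Propagating this invariance along the zigzag and using that a flanked, totally non-degenerate quadruple is canonically isomorphic to its own normalization yields a unique isomorphism between the two endpoints.

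The main obstacle I anticipate is the bookkeeping: one must track the necklace data, the simplex $J_\bullet$, and the map $\phi$ simultaneously through the zigzag, and verify that the unique necklace isomorphism produced by the invariance principle carries all of this data compatibly. This should follow from the functoriality of the pushforward established in Proposition \ref{NWTNS_NAT_PROP} together with the observation that, for a necklace isomorphism, the pushforward $J_k \mapsto \varphi J_k$ is simply relabeling; still, the argument will benefit from an explicit statement of the invariance principle as an intermediate lemma before its inductive application along the zigzag.
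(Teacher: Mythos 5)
Your proposal is correct and takes essentially the same two-step approach as the paper: existence via Lemma~\ref{FLANKING LEM}(i) followed by Lemma~\ref{DEGNECK LEM}, and uniqueness via Lemma~\ref{FLANKING LEM}(ii), Remark~\ref{FLNKNECDEG REM}, and induction on the zigzag using the uniqueness clause of Lemma~\ref{DEGNECK LEM}. The extra details you flag (that pushing forward a flanked simplex along a necklace degeneracy stays flanked, and the explicit statement of the ``invariance principle'') are correct observations the paper leaves implicit, not points of departure.
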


\begin{proof}
	By Lemma \ref{FLANKING LEM}(i)
	any quadruple is equivalent to a flanked quadruple 
	and, by Lemma \ref{DEGNECK LEM},
	any flanked quadruple is equivalent to a flanked quadruple that is also totally non-degenerate.
	
	As for the uniqueness condition, 
	by Lemma \ref{FLANKING LEM}(ii)
	we need only consider zigzags of 
	equivalences of flanked quadruples, 
	which are induced by necklace degeneracies, 
	in the sense of Definition \ref{NECKDEG DEF},
	cf. Remark \ref{FLNKNECDEG REM}.
	Thus, arguing by induction on the size of the zigzag,
	Lemma \ref{DEGNECK LEM} implies that 
	all flanked quadruples in the zigzag have the same
	totally non-degenerate quotient,
	so the desired uniqueness claim reduces to the uniqueness claim
	in Lemma \ref{DEGNECK LEM}.
\end{proof}

We conclude the paper by using Theorem \ref{THMB}
to describe $W_!$ applied to the 
key dendroidal sets in \S \ref{DSETSOPS SEC}.
We first make some useful observations
concerning $W_!$ applied to a representable $\Omega[U]$.

\begin{example}\label{WREP_EX}
	For $X=\Omega[U]\in \mathsf{dSet}$,
	one can describe $W_!(\Omega[U])$
	via either Proposition \ref{PROPA PROP} or Theorem \ref{THMB}.
	In preparation for the next examples, 
	which require Theorem \ref{THMB},
	we will find it useful to work out how 
	Theorem \ref{THMB} recovers Proposition \ref{PROPA PROP}. 
	Putting together all the data in the unique representative description
	in Theorem \ref{THMB},
	a simplex of $W_!(\Omega[U])$ is strictly uniquely represented by
\begin{equation}\label{WOMT EQ}
\begin{tikzcd}
	S \ar{r}{t} &
	J_0 = J \ar{r}{i,p} &
	J_1 \ar{r}{i,p} &
	\cdots \ar{r}{i,p} &
	J_n = T \ar{r}{f,p}[swap]{\phi} &
	U.
\end{tikzcd}
\end{equation}
This requires some justification. 
First, note that the role of $\phi$ is to represent a map
$\phi \colon \Omega[J\to T] \to \Omega[U]$. 
Then, the requirement in Theorem \ref{THMB} that $\phi$ is totally non-degenerate 
as a map of necklaces reduces to the implied claim in 
\eqref{WOMT EQ} that $\phi$ is a face map of trees.
The conditions $J_0 =J$ and $J_n = T$ are the flanked conditions.
Lastly, the assumption in \eqref{WOMT EQ}
that $\phi$ is planar is a \emph{choice}, 
which one is free to make, which turns the 
``uniqueness up to unique isomorphism''
in Theorem \ref{THMB} into strict uniqueness.
We now see that \eqref{WOMT EQ} indeed recovers \eqref{PROPA EQ}
(that this is also compatible with the simplicial structure follows from the flanking procedure in \eqref{FLANKING EQ}).

We now apply Remark \ref{READMAPSP REM}
to determine the mapping spaces of $W_!(\Omega[T])$.
Since this operad has color set 
$\boldsymbol{E}(U)$,
we consider signatures 
$(e_1,\cdots,e_n;e_0)$ with 
$e_i \in \boldsymbol{E}(U)$,
which can be regarded as a map
$e(-) \colon \boldsymbol{E}(C) \to \boldsymbol{E}(U)$
for $C$ the $n$-corolla.
We now claim it is 
(cf. \cite[\S 4]{CM13b})
\begin{equation}\label{WU_EQ2}
	W_!(\Omega[U])(e_1,\cdots,e_n;e_0) =
	\begin{cases}
	\Delta[1]^{\times \boldsymbol{E}^{\mathsf{i}}(\overline{e(C)})}
	\qquad
&
	\mbox{if $\boldsymbol{E}(C) \xrightarrow{e(-)} \boldsymbol{E}(U)$ defines a map in $\Omega$}
\\
	\varnothing
&
	\mbox{otherwise}
\end{cases}
\end{equation}
where $\overline{e(C)}$ is the outer closure notation in 
Notation \ref{TREEFACT NOT}.
In words, $\overline{e(C)}$ is the unique outer face of $U$
with leaves $e_1,\cdots,e_n$ and root $e_0$,
if such tree exists.
The identification \eqref{WU_EQ2}
now follows by setting $S=C$ in \eqref{WOMT EQ}. 
Indeed, if $e(-)$ does not define a map $C \to U$,
then no factorizations as in \eqref{WOMT EQ} exist.
And, otherwise, the only restriction on the $J_i$ therein
is that they must be inner faces of $\overline{e(C)}$.
But then \eqref{WOMT EQ} computes the nerve of the poset
$\mathsf{Face}_{inn}(\overline{e(C)})$ of inner faces of $\overline{e(C)}$,
which coincides with the poset 
$(0 \to 1)^{\times \boldsymbol{E}^{\mathsf{i}}(\overline{e(C)})}$
of subsets of its inner edges,
establishing \eqref{WU_EQ2}.
\end{example}

\begin{example}\label{WPARTIALT_EX}
	We now apply Theorem \ref{THMB} to compute
	$W_!(\partial \Omega[U])$.
	By the discussion in Example \ref{WREP_EX},
	its simplices are uniquely represented just as in 
	\eqref{WOMT EQ},
	except with the caveat that $\phi$
	now represents a map
	$\phi \colon \Omega[J\to T] \to \partial \Omega[U]$.
	This imposes the following restriction: $U$ itself can not be a bead of the necklace $J\to T$,
	which amounts to either 
	$J\neq \mathsf{lr}(U)$ or $T\neq U$.
	
	As such, for any signature
	$(e_1,\cdots,e_n;e_0)$ of $\boldsymbol{E}(U)$
	which is not the left-root signature,
	one has
	$W_!(\partial \Omega[U])(e_1,\cdots,e_n;e_0) =
	W_!(\Omega[U])(e_1,\cdots,e_n;e_0)$,
	since then $T$ is a proper face of $U$.
	
	And, for the leaf-root signature $(\underline{l};r)$,
	this restriction amounts to excluding the boundary
	of the nerve of the poset 
	$\mathsf{Face}_{inn}(U)
	\simeq 
	(0 \to 1)^{\times \boldsymbol{E}^{\mathsf{i}}(U)}$,
	thus identifying 	$W_!(\partial \Omega[U])(\underline{l},r)$ 
	with the \emph{domain} of the iterated pushout product
	\[
	\left(
	\{0,1\} \to \Delta[1]
	\right)^{\square \boldsymbol{E}^{\mathsf{i}}(U)}.
	\]
\end{example}

\begin{example}\label{WPARTIALT2_EX}
	Let $U\in \Omega$ and 
	$\emptyset \neq E \subseteq \boldsymbol{E}^{\mathsf{i}}(U)$,
	and consider $W_!(\Lambda^E[U])$.
	As in Example \ref{WPARTIALT_EX},
	one now requires for $\phi$ in \eqref{WOMT EQ}
	to encode a map
	$\Omega[J \to V] \to \Lambda^E[U]$,
	which imposes the restriction that
	either $T \not \supseteq U-E$ or
	$J \neq \mathsf{lr}(U)$. 
		
	As in Example \ref{WPARTIALT_EX}
	one has 
	$W_!(\Lambda^E[U])(e_1,\cdots,e_n;e_0) = W(\Omega[U])(e_1,\cdots,e_n;e_0)$
	whenever $(e_1,\cdots,e_n;e_0) \not \simeq (\underline{l};r)$,
	as then $T$ can contain no inner faces.

	Lastly, for the leaf-root signature $(\underline{l},r)$,
	the given restrictions identify
	$W_!(\Lambda^E[U])(\underline{l},r)$ 
	with the \emph{domain} of the iterated pushout product
\[
	\left(
	\{0,1\} \to \Delta[1]
	\right)^{\square \boldsymbol{E}^{\mathsf{i}}(U)-E}
	\square
	\left(
	\{1\} \to \Delta[1]
	\right)^{\square E}.
\]
\end{example}

\providecommand{\bysame}{\leavevmode\hbox to3em{\hrulefill}\thinspace}
\providecommand{\MR}{\relax\ifhmode\unskip\space\fi MR }
\providecommand{\MRhref}[2]{%
	\href{http://www.ams.org/mathscinet-getitem?mr=#1}{#2}
}
\providecommand{\doi}[1]{%
	doi:\href{https://dx.doi.org/#1}{#1}}
\providecommand{\arxiv}[1]{%
	arXiv:\href{https://arxiv.org/abs/#1}{#1}}
\providecommand{\href}[2]{#2}

\makeatletter\@input{xxOC.tex}\makeatother

\makeatletter\@input{xxAC.tex}\makeatother

\makeatletter\@input{xxTAS.tex}\makeatother

\end{document}